\ifpdf \usepackage[colorlinks=true, citecolor=blue, linkcolor=blue, urlcolor=blue]{hyperref} \fi
\newtheorem{formula}{}[section]
\newtheorem{definition}[formula]{Definition}
\newtheorem{corollary}[formula]{Corollary}
\newtheorem{remark}[formula]{Remark}
\newtheorem{lemma}[formula]{Lemma}
\newtheorem{theorem}[formula]{Theorem}
\newtheorem{prop}[formula]{Proposition}
\newtheorem{construction}{Construction}
\def\thrm{\begin{theorem}}
\def\thrml#1{\begin{theorem}\label{#1}}
\def\ethrm{\end{theorem}}
\def\rmrk{\begin{remark}}
\def\rmrkl#1{\begin{remark}\label{#1}}
\def\ermrk{\end{remark}}
\def\dfntn{\begin{definition}}
\def\dfntnl#1{\begin{definition}\label{#1}}
\def\edfntn{\end{definition}}
\def\nmrt{\begin{enumerate}}
\def\enmrt{\end{enumerate}}
\def\qtnl#1{\begin{equation}\label{#1}}
\def\eqtn{\end{equation}}
\def\lmm{\begin{lemma}}
\def\lmml#1{\begin{lemma}\label{#1}}
\def\elmm{\end{lemma}}
\def\crllr{\begin{corollary}}
\def\crllrl#1{\begin{corollary}\label{#1}}
\def\ecrllr{\end{corollary}}
\def\css{\begin{cases}}
\def\ecss{\end{cases}}
\def\prf{\begin{proof}}
\def\eprf{\end{proof}}
\begin{document}

\title[SRGs decomposable into a DDG and a Hoffman coclique]{Strongly regular graphs decomposable into a~divisible design graph and a Hoffman coclique}
\author{}
\address{}
\author{Alexander L. Gavrilyuk}
\address{Shimane University, Matsue, Japan}
\email{gavrilyuk@riko.shimane-u.ac.jp}
\author{Vladislav V. Kabanov}
\address{Krasovskii Institute of Mathematics and Mechanics, Yekaterinburg, Russia}
\email{vvk@imm.uran.ru}
\thanks{The research of Alexander Gavrilyuk is supported by JSPS KAKENHI Grant Number 22K03403.}
\date{}

\begin{abstract}
In 2022, the second author found a prolific construction 
of strongly regular graphs, which is based on joining a coclique and  
a divisible design graph with certain parameters.
The construction produces strongly regular graphs 
with the same parameters as the complement of the symplectic 
graph $\mathsf{Sp}(2d,q)$.

In this paper, we determine the parameters of strongly regular 
graphs which admit a decomposition into a divisible design graph 
and a coclique attaining the Hoffman bound. 
In particular, it is shown that when the least eigenvalue 
of such a strongly regular graph is a prime power, its parameters 
coincide with those of the complement of $\mathsf{Sp}(2d,q)$. 
Furthermore, a generalization of the construction is discussed. 
\end{abstract}

\maketitle

\section{Introduction}\label{sec1}

In \cite{VKa}, the second author of the present paper proposed 
a construction of strongly regular graphs. It goes as follows: 
take a divisible design graph, say $\Delta$, with specific parameters, 
add an independent set of vertices (a coclique) $C$ whose cardinality 
depends on the parameters of $\Delta$, 
and join their vertices in a certain way governed by a symmetric 2-design 
on $|C|$ points. The resulting graph, say $\Gamma$, has the parameters 
of the complement of the symplectic graph $\mathsf{Sp}(2d,q)$.
In an earlier paper \cite{VK}, a ({\em prolific}) construction 
of hyperexponentially many non-isomorphic divisible design graphs 
with the parameters of $\Delta$ was discovered; thus, combining these 
two results gives a prolific construction of strongly regular graphs 
with the parameters of $\Gamma$. 

A natural question arises: can one use divisible design graphs 
whose parameters are different from those required in \cite{VKa} 
(in order to obtain strongly regular graphs whose parameters 
are different from those of the complement of~$\mathsf{Sp}(2d,q)$)?
Note that the coclique $C$ is a Hoffman coclique 
in $\Gamma$, i.e., its cardinality satisfies the Delsarte-Hoffman bound. 
In this paper, we analyse the situation when a strongly regular graph 
$\Gamma$ contains a Hoffman coclique $C$ such that the subgraph 
induced on $\Gamma\setminus C$ is a divisible design graph. 
Our main theorem essentially answers the above question as follows.

\begin{theorem}\label{theo:main}
Let $\Gamma$ be a primitive strongly regular graph with parameters $(v,k,\lambda,\mu)$ 
and the least eigenvalue $s$. Suppose that $\Gamma$ contains a coclique $C$ 
of cardinality $\frac{vs}{s-k}$ and the subgraph 
induced on $\Gamma\setminus C$ is a proper divisible design graph. Then 
there exists a natural number $n$ such that the parameters of 
$\Gamma$ satisfy:
\[
v=\frac{(-s)(n^2-1)}{n+s},\quad 
k=(-s)n,\quad 
\lambda=\mu=(-s)(n+s).
\]
\end{theorem}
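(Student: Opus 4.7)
The plan is to set up the bi-adjacency matrix $N$ of size $|\Gamma'|\times|C|$ between $\Gamma':=\Gamma\setminus C$ and $C$, and to compare two descriptions of $NN^T$. The Hoffman equality makes the partition $\{C,\Gamma'\}$ equitable: every vertex of $\Gamma'$ has exactly $-s$ neighbours in $C$, so $\Gamma'$ is $K$-regular with $K:=k+s$. Since $C$ is an independent set and any two of its vertices share $\mu$ common neighbours (all lying in $\Gamma'$), one has $N^{T}N=(k-\mu)I+\mu J$, whence $NN^T$ has spectrum $\{-sk,\,k-\mu,\,0\}$ with multiplicities $1$, $|C|-1$, and $|\Gamma'|-|C|$ respectively. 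On the other hand, restricting $A^2=kI+\lambda A+\mu(J-I-A)$ to the $\Gamma'$-block and substituting the DDG relation $(A')^2=(K-\Lambda_1)I+(\Lambda_1-\Lambda_2)E+\Lambda_2 J$ (with $E$ the ``same-class'' indicator matrix) produces the identity
\[
NN^T=(\Lambda_1-\mu-s)I+(\lambda-\mu)A'+(\mu-\Lambda_2)J+(\Lambda_2-\Lambda_1)E.
\]

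The pivotal step is to apply both expressions of $NN^T$ to the columns of $N$ themselves. For $z\in C$ let $\chi_z\in\{0,1\}^{\Gamma'}$ be the indicator of $N(z)$. From $N^{T}N$ one obtains $NN^T\chi_z=(k-\mu)\chi_z-s\mu\mathbf{1}$, while the SRG property directly gives $A'\chi_z=(\lambda-\mu)\chi_z+\mu\mathbf{1}$ (since the number of common neighbours of $z$ and $x\in\Gamma'$ is $\lambda$ or $\mu$ according to whether $x\sim z$). Comparing the two yields
\[
(\Lambda_2-\Lambda_1)\,E\chi_z=\bigl(\theta^2-(\lambda-\mu)^2\bigr)\chi_z+c\,\mathbf{1},\qquad\theta^2:=K-\Lambda_1,
\]
for a constant $c$ independent of $z$. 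Since the DDG is proper ($\Lambda_1\ne\Lambda_2$) and $(E\chi_z)_x=|N(z)\cap C(x)|$ is constant for $x$ ranging over a single class, this forces the following dichotomy for each $z\in C$: either \emph{(I)} $\theta^2=(\lambda-\mu)^2$ and $z$ has exactly $k/m$ neighbours in each class, or \emph{(II)} every class is entirely contained in or entirely disjoint from $N(z)$.

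Assume $\lambda\ne\mu$ and rule out both cases. In Case~(II), counting common neighbours in $C$ of pairs inside one class and between two classes forces each class to be a clique or a coclique in $\Gamma'$, and any two classes to be completely joined or completely disjoint; the $K$-regularity of $\Gamma'$ then makes all classes of the same type, reducing $\Gamma'$ to a blow-up of a regular graph, and the ``all-coclique'' branch yields $\mu=k$ while the ``all-clique'' branch yields $\lambda=k-1$ (that is, $\Gamma$ is a disjoint union of cliques) -- both contradicting primitivity of $\Gamma$. In Case~(I), applying the $NN^T$-identity to the class indicators $\chi_{C_i}$, together with the observation that $N^{T}\chi_{C_i}=(k/m)\mathbf{1}_C$, shows $A'\chi_{C_i}\in\mathrm{span}(\chi_{C_i},\mathbf{1})$; hence the DDG partition is equitable, and the extended $(m+1)$-part partition $\{C,C_1,\dots,C_m\}$ of $\Gamma$ is equitable too. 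Its quotient matrix contributes eigenvalues $\{k,s\}$ from the $2$-dimensional ``collapsed'' block together with a third eigenvalue $\alpha-\beta$ of multiplicity $m-1$, which must belong to $\{r,s\}$. Combining $\sigma^2=(\alpha-\beta)^2$, $\theta^2=(r+s)^2$ (Case~I), the proper-DDG condition $\sigma^2\ne\theta^2$, and the eigenvalue analysis of $NN^T$ restricted to $V_2:=\mathrm{span}(\chi_{C_1},\dots,\chi_{C_m})^\perp$ (where $A'$ has eigenvalues $\pm\theta$ and $NN^T$-eigenvalues must lie in $\{0,k-\mu\}$) yields a numerical impossibility. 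This gives $\lambda=\mu$.

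Once $\lambda=\mu$ is established, the SRG identities give $r=-s$ and $k=\mu+s^2$, while integrality of the eigenvalue multiplicities forces $s\mid k$. Setting $n:=-k/s\in\mathbb{Z}_{>0}$ immediately yields $k=-sn$ and $\lambda=\mu=-s(n+s)$, and rearranging $k(k-\lambda-1)=\mu(v-k-1)$ gives $v=-s(n^2-1)/(n+s)$. The \textbf{main obstacle} I anticipate is the end of Case~(I): the equitable-quotient eigenvalue analysis gives strong constraints, but a fully clean argument requires careful bookkeeping of which signs of $\pm\theta$ actually appear in $V_2$ -- if both do, one encounters the extra relation $k-\mu=2(\lambda-\mu)^2$, which must then be excluded using either integrality/multiplicity considerations or a second application of the column-counting trick with a different test vector.
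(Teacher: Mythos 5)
Your overall strategy is genuinely different from the paper's: the paper matches the four-eigenvalue spectrum of $\Gamma\setminus C$ (from the Haemers--Higman/van Dam proposition) against the five-term divisible design graph spectrum and works through eight cases, whereas you extract everything from the block identity for $NN^T$ and the test vectors $\chi_z$. Your preliminary computations check out ($N^TN=(k-\mu)I+\mu J$ with spectrum $\{-sk,(k-\mu)^{c-1},0^{V-c}\}$, the identity $NN^T=(\lambda_1-\mu-s)I+(\lambda-\mu)A'+(\mu-\lambda_2)J+(\lambda_2-\lambda_1)E$, and the coefficient $\theta^2-(\lambda-\mu)^2$ of $\chi_z$), and your Case~(II) is correct and is actually an improvement on the paper at that point: it disposes of the configurations the paper calls Cases (1) and (4) by a purely combinatorial blow-up argument, where the paper's Case (4) has to invoke the classification of strongly regular graphs with least eigenvalue $-2$.

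However, there is a genuine gap, and it sits exactly where you flagged it. In Case~(I) the relation $k-\mu=2(\lambda-\mu)^2$ is equivalent to $(2r+s)(r+2s)=0$, i.e.\ $s=-2r$ or $r=-2s$; your argument that ``only one sign of $\pm\theta$ on $V_2$'' forces $r=-s$ correctly eliminates everything else, but when both signs occur you are left with precisely these two families, and the proposal offers no actual argument against them --- only the suggestion that they ``must then be excluded.'' These are the paper's Cases (8) and (5), and they are where the real work of the paper lives: Case (8) needs the trace inequality $0\le K+(g_1-g_2)\sqrt{K^2-\lambda_2V}\le m(n-1)$ for the quotient matrix to force $m=2$ and then $r\le 2$; Case (5) needs a quadratic for $c$, a bound forcing $s\in\{-2,-3\}$, Seidel's and Neumaier's classifications of strongly regular graphs with least eigenvalue $-2$ and $-3$, and divisibility checks against half a dozen explicit parameter sets such as $(27,16,10,8)$ and $(70,27,12,9)$. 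Your Case~(I) does supply extra leverage the paper does not exploit at this stage (the quotient matrix is $aI+bJ$ and every $z\in C$ has $k/m$ neighbours in each class, so $m\mid k$, which in fact kills several of the paper's sporadic candidates), so the gap may well be closable along your lines --- but as written the proof of $\lambda=\mu$, and hence of the theorem, is incomplete.
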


\begin{corollary}\label{coro}
In the notation of Theorem \ref{theo:main}, if $-s$ is a prime power, 
then $n$ also is a prime power divisible by $-s$. 
In particular, with $q=n/(-s)$, 
$s=-q^d$, $n=q^{d+1}$ for some natural $d$, 
and $\Gamma$ has the parameters of the complement of~$\mathsf{Sp}(2d,q)$.
\end{corollary}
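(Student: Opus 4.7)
Set $t := -s$ and write $t = p^a$ with $p$ prime and $a \geq 1$; note $t \geq 2$ since $\Gamma$ is primitive. By Theorem~\ref{theo:main} the parameters of $\Gamma$ are $v = t(n^2-1)/(n-t)$, $k = tn$, $\lambda = \mu = t(n-t)$ with $n > t$, and the Hoffman coclique has cardinality $|C| = t(n-1)/(n-t)$. Integrality of $|C|$ requires $(n-t) \mid t(n-1)$; since $t(n-1) = t(n-t) + t(t-1)$, this reduces to $(n-t) \mid t(t-1) = p^a(p^a-1)$. As $p \nmid p^a-1$, taking $p$-adic valuations yields $v_p(n-t) \leq a$.

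Next I would establish the two divisibility claims at the heart of the corollary: $t \mid n$, and $n/t$ is a power of $p$. The proof of Theorem~\ref{theo:main} derives the DDG parameters on $\Gamma \setminus C$ (its class size, its $\lambda_1, \lambda_2$, and the multiplicities of its eigenvalues) as explicit functions of $n$ and $t$. Combining their integrality with the integer-multiplicity conditions for the eigenvalues of $\Gamma$ itself, one forces $v_p(n) \geq a$; equivalently $t \mid n$. Writing $n = qt$ with $q \geq 2$, the divisibility $(n-t) \mid t(t-1)$ becomes $(q-1) \mid (t-1) = p^a-1$. A second pass of the same DDG/SRG integrality analysis, now regarded as a condition on $q$, forces every prime divisor of $q$ to equal $p$, so $q = p^b$ for some $b \geq 1$. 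This already gives the first sentence of the corollary: $n = p^{a+b}$ is a prime power divisible by $t$.

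To obtain the sharper shape $s = -q^d$ and $n = q^{d+1}$, I would use the classical fact that $(p^b-1) \mid (p^a-1)$ iff $b \mid a$; setting $d := a/b$ gives $q^d = p^{bd} = p^a = t$, hence $s = -q^d$ and $n = qt = q^{d+1}$. Plugging these into Theorem~\ref{theo:main}'s parameter formulas yields
\[
v = \frac{q^{2d+2}-1}{q-1}, \qquad k = q^{2d+1}, \qquad \lambda = \mu = q^{2d}(q-1),
\]
which match the parameters of the complement of $\mathsf{Sp}(2d,q)$.

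The principal obstacle is the pair of divisibility claims in the middle paragraph. Purely arithmetic integrality of $v$, $k$, $|C|$ and of the eigenvalue multiplicities of $\Gamma$ does not suffice: for instance $(t,n) = (3,5)$ produces the feasible parameter set $(36,15,6,6)$, which is realized by an actual SRG (e.g.\ $\mathsf{NO}^-(6,2)$), yet $5$ is not a power of $3$. The decisive extra input must therefore come from the DDG parameters derived in the proof of Theorem~\ref{theo:main}; in particular, the integrality of its class count, together with constraints from the multiplicities of its non-principal eigenvalues, should rule out the remaining feasible but non-prime-power values of $n$.
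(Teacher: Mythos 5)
Your final paragraph is essentially the paper's second step: the paper gets $(q-1)\mid(n-1)$ from the integrality of the class count $m=(n-1)/(q-1)$, which is equivalent to your $(q-1)\mid(t-1)$, and then concludes $b\mid a$, $d=a/b$ exactly as you do. The problem is the middle of your argument. The two claims you need --- $t\mid n$ and $n/t$ a power of $p$ --- are not proved but only asserted to follow from an unspecified ``DDG/SRG integrality analysis,'' and you yourself flag this as the principal obstacle. That is a genuine gap: you never identify which quantity's integrality does the work, and the candidates you name (the class count and the eigenvalue multiplicities) do not suffice. In your own test case $(t,n)=(3,5)$ one has $c=m=6$ integral and all multiplicities of $\Gamma$ and of the matched DDG spectrum integral as well; nothing you have written down excludes it.

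The missing ingredient is the single parameter $\lambda_2=(-s)(n-1)(n+s)/n$ of $\Delta$ computed in Lemma~\ref{lm:Delta}. Its integrality, together with $\gcd(n,n-1)=1$, forces $n\mid(-s)(n+s)$, i.e.\ $n\mid s^2=p^{2a}$; combined with $n>-s$ (needed for $v>0$) this gives $n=p^e$ with $a<e\le 2a$, so $t\mid n$ and $q=n/t=p^{e-a}$ follow in one stroke --- no ``second pass'' is required, and your two divisibility claims collapse into this one observation. (In the example $(t,n)=(3,5)$ one indeed gets $\lambda_2=24/5$.) So your proposal has the right skeleton and the correct closing step, but the decisive step is left unexecuted, and locating it in $\lambda_2$ is precisely the content of the paper's proof of the corollary.
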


We will recall the necessary definitions and notation 
in Section \ref{sect:2}. 
The proof of Theorem \ref{theo:main} is based on the following observation. 
On the one hand, the spectrum of the graph induced by $\Gamma\setminus C$ 
is completely determined in terms of the spectrum of $\Gamma$ 
and contains at most four distinct eigenvalues. 
On the other hand, the spectrum of a divisible design graph $\Delta$
contains at most five distinct eigenvalues; however, their multiplicities 
in general are not determined by the parameters of $\Delta$.  
Comparing these spectra and using various feasibility conditions 
yields the result; its proof occupies Section \ref{sect:3}.

In Section \ref{sect:4}, we describe a construction of a graph $\Gamma$ 
satisfying the hypothesis of Theorem \ref{theo:main}. When $-s$ is not 
a prime power, it slightly generalizes the one given in \cite{VKa}; 
however, no examples are known to us: we will discuss the smallest open case
which could be constructed in this way. 

Note that the decomposition of $\Gamma$ as in Theorem \ref{theo:main} 
is a special case of the so-called regular decomposition (equitable partition) of strongly regular graphs~\cite{H}, see also \cite[Section~1.1.13]{BW}. 
Strongly regular graphs with strongly regular decomposition 
were studied by Haemers and Higman in~\cite{HH}:   
in particular, they characterized such graphs when 
one of the two parts is a coclique and the other one 
is also a strongly regular graph. 

Note also that a Hoffman coclique in $\Gamma$ becomes a Delsarte clique 
in the complement $\overline{\Gamma}$ of $\Gamma$, whose cardinality attains 
the Delsarte clique bound. However, our result does not apply 
to a decomposition of a strongly regular graph into a clique and 
a divisible design graph (see \cite[Section~5]{VKa}), 
since the complement of the latter one is not a divisible design graph 
in general. This type of a decomposition will be studied 
elsewhere.

We conclude the introduction with the following remark to put this work 
into a larger context. In 1971, Wallis \cite{WW} 
proposed a construction of strongly regular graphs based on 
affine designs and a Steiner 2-design: it takes all points 
of some affine designs as the vertices and adds edges between them 
in a way governed by the Steiner design. 
Thirty years later Fon-Der-Flaass \cite{FF} independently reinvented 
this idea: he proposed three constructions and showed that they produce 
hyperexponentially many strongly regular graphs. 
The first Fon-Der-Flaass construction is a special case of the Wallis' one 
when a Steiner design is complete, i.e., has blocks of size 2. 
In particular, when one takes affine planes of order $q$ 
as the affine designs, the resulting graphs have the parameters of   
a generalized quadrangle $\mathsf{GQ}(q+1,q-1)$.
The second and third constructions explicitly use affine planes
and produce graphs with the parameters of $\mathsf{GQ}(q,q)$ and 
$\mathsf{GQ}(q-1,q+1)$, respectively. 
Muzychuk \cite{MM} went further and generalized the idea of Wallis 
by replacing a Steiner 2-design with a partial linear space 
whose collinearity graph is itself strongly regular. 
In this way, he built at least 6 families of strongly regular graphs 
including and generalizing the first and third Fon-Der-Flaass constructions: 
but not the second one. This missing generalization seems to 
be given by the construction from \cite{VKa}. 

We also remark that quite a few other constructions 
of strongly regular graphs with the parameters of $\mathsf{Sp}(2d,q)$ 
have been found: Abiad and Haemers \cite{AH} and Kubota \cite{Ku} 
applied Godsil-McKay switching when $q=2$, Ihringer \cite{Ih} 
and Brouwer, Ihringer, and Kantor \cite{BIK} developed a sort of 
switching in polar spaces.

\section{Strongly regular and divisible design graphs}\label{sect:2}

All basic definitions and results on strongly regular graphs 
can be found in~\cite{BW}. 
Let $\Gamma$ be a primitive strongly 
regular graph with parameters $(v,k,\lambda,\mu)$ and 
spectrum $k^1,r^f,s^g$, where $k>r>s$ and the exponents 
mean the multiplicities. 
Recall that a coclique (an independent set) $C$ in $\Gamma$ has cardinality 
not exceeding $vs/(s-k)$, due to Delsarte \cite[Section 3.3]{D} and 
Hoffman \cite{AJH}. 
In the special case when 
$C$ is a {\bf Hoffman coclique}, i.e., 
$|C|$ attains this upper bound, 
one can determine the spectrum of the subgraph induced on $\Gamma\setminus C$ (hereinafter for a subset $X$ of the vertex set of $\Gamma$ 
we also write $X$ for the subgraph of $\Gamma$ induced by $X$). 

\begin{prop}\label{vd}
{\rm (\cite[Theorem 2.4]{HH}, \cite[]{vD})}  
Let $\Gamma$ be a primitive strongly regular graph on $v$ vertices, with spectrum $k^1, r^f, s^g$, 
where $k>r>s$. If $\Gamma$ contains a Hoffman coclique $C$ (of size $c=vs/(s-k)$), 
then the induced subgraph $\Gamma\setminus C$ is a regular, connected graph with spectrum
$$(k+s)^1,r^{f-c+1},(r+s)^{c-1},s^{g-c},$$
and it has four distinct eigenvalues if $c<g$. 
\end{prop}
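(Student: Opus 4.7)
The plan is to exploit the fact that attaining the Hoffman bound forces the partition $\{C, V\setminus C\}$ to be equitable. I would first decompose the characteristic vector $\chi_C$ in an orthogonal eigenbasis of $A$ as $\chi_C = (c/v)\mathbf{1} + e_r + e_s$, where $e_r, e_s$ lie in the $r$- and $s$-eigenspaces. The two identities $0 = \chi_C^T A\chi_C = kc^2/v + r\|e_r\|^2 + s\|e_s\|^2$ and $\|\chi_C\|^2 = c$, combined with $c = vs/(s-k)$, force $e_r = 0$, so $\chi_C - (c/v)\mathbf{1}$ lies in the $s$-eigenspace of $A$. Hence $A\chi_C = s\chi_C + (k-s)(c/v)\mathbf{1}$, which evaluates to $0$ on $C$ (since $C$ is a coclique) and to $-s$ on $V\setminus C$. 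Every vertex of $V\setminus C$ therefore has exactly $-s$ neighbours in $C$, the partition is equitable, and $\Gamma\setminus C$ is regular of degree $k-(-s)=k+s$.

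Next I would extract the spectrum of the adjacency matrix $A'$ of $\Gamma\setminus C$ by comparing blocks of the SRG identity $A^2 = (k-\mu)I + \mu J + (\lambda-\mu)A$. Writing $A$ in block form with off-diagonal blocks $B, B^T$ recording the $C$-to-$(V\setminus C)$ edges, the $(V\setminus C, V\setminus C)$-block yields
\[
BB^T = (k-\mu)I + \mu J + (\lambda-\mu)A' - A'^2.
\]
Restricted to the zero-sum subspace of $\mathbb{R}^{V\setminus C}$ (where $J$ acts as $0$) and using $r+s = \lambda-\mu$ and $rs = \mu-k$, this right-hand side factors as $-(A'-rI)(A'-sI)$. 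Thus, for an $A'$-eigenvector $y$ in this subspace with eigenvalue $\eta$, one reads off $\|B^T y\|^2 = -(\eta-r)(\eta-s)\|y\|^2 \ge 0$, which already forces $\eta \in [s,r]$. Moreover, since the $A$-spectrum on $U^\perp := \mathrm{span}(\mathbf{1}_C, \mathbf{1}_{V\setminus C})^\perp$ consists only of $r$ and $s$, the vector $(0,y) \in U^\perp$ must decompose as $w_r + w_s$ with $Aw_r = rw_r$ and $Aw_s = sw_s$; solving the resulting linear system for $w_r, w_s$ and imposing $Aw_r = rw_r$ at the $C$-component shows this is consistent precisely when $\eta \in \{r, s, r+s\}$.

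Finally, I would determine the multiplicities $m_r, m_{r+s}, m_s$ of $A'$ on the zero-sum subspace. For $\eta \in \{r,s\}$ the factored identity gives $B^T y = 0$, so the extension $(0,y)$ is already an $A$-eigenvector with eigenvalue $\eta$. For $\eta = r+s$, seeking an $A$-eigenvector of the form $(B^T y/\theta, y)$ leads to $\theta(\theta-(r+s)) = -rs$, whose two roots are exactly $\theta = r$ and $\theta = s$; so each $y$ produces one $A$-eigenvector for each such $\theta$. Matching these contributions against the $A$-spectrum $r^f, s^{g-1}$ on $U^\perp$ yields
\[
m_r + m_{r+s} = f, \qquad m_s + m_{r+s} = g-1, \qquad m_r + m_{r+s} + m_s = v-c-1,
\]
with the unique solution $m_r = f-c+1$, $m_{r+s} = c-1$, $m_s = g-c$. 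Adjoining the simple Perron eigenvalue $k+s$ (with eigenvector $\mathbf{1}_{V\setminus C}$) gives the stated spectrum, and the four-distinct-eigenvalue assertion for $c<g$ is immediate. Connectedness of $\Gamma\setminus C$ follows from the $1$-dimensionality of the $(k+s)$-eigenspace under the standing primitivity assumption. The main bookkeeping obstacle is the case split between $A$-eigenvectors with vanishing and nonvanishing $C$-component, which the polynomial identity for $BB^T$ on the zero-sum subspace resolves cleanly.
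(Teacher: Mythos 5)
The paper does not actually prove Proposition~\ref{vd}: it is quoted from Haemers--Higman \cite{HH} and van Dam, so there is no in-paper argument to compare against. Your reconstruction is the standard one and is essentially correct and complete: the Hoffman-bound computation forcing $e_r=0$ and hence equitability of $\{C,V\setminus C\}$ is right, the identity $BB^T=-(A'-rI)(A'-sI)$ on the zero-sum subspace correctly confines the non-principal eigenvalues of $A'$ to $\{r,s,r+s\}$ (note $s<r+s<r$, so these are genuinely three distinct values for a primitive SRG), and the lifting of eigenvectors ($(0,y)$ when $B^Ty=0$, two lifts per $(r+s)$-eigenvector otherwise) gives the multiplicities. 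Two small points deserve to be made explicit. First, the relations $m_r+m_{r+s}=f$ and $m_s+m_{r+s}=g-1$ are a priori only inequalities $\le$ (you exhibit that many independent eigenvectors of $A$ in $U^\perp$); they become equalities because their sum already equals $\dim U^\perp=v-2$, and this one-line dimension count should be stated. Second, your connectedness argument needs $k+s\notin\{r,r+s,s\}$ so that $k+s$ is a simple eigenvalue in the listed spectrum; one checks that $k+s=r$ can occur for a primitive SRG only for the Petersen graph (where indeed $\Gamma\setminus C=3K_2$ is disconnected and $c=g$), so the connectedness claim as literally stated has this exceptional case --- an issue inherited from the quoted statement rather than introduced by you, and irrelevant to the paper's application, which always assumes $c<g$.
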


Divisible design graphs were introduced in \cite{HKM} (see also \cite{CH}).  
A $K$-regular graph on $V$ vertices, which is not complete or edgeless, 
is a {\bf divisible design graph} with parameters $(V,K,\lambda_1 ,\lambda_2; m,n)$ 
if its vertex set can be partitioned into $m$ classes of size $n$, 
in such a way that any two different vertices from the same class have  $\lambda_1$ 
common neighbours, and any two vertices from different classes 
have  $\lambda_2$ common neighbours. 
A divisible design graph is {\bf proper} unless 
$m = 1$, $n = 1$, or $\lambda_1 = \lambda_2$, in which cases the graph 
is strongly regular. 

The partition of a divisible design graph into  classes is called  a {\bf canonical partition}. Let $\Delta$ be a divisible design graph with parameters $(V,K,\lambda_1,\lambda_2; m,n)$. 
By~\cite[Lemma 2.1]{HKM}, the spectrum of $\Delta$ is 
\begin{equation}\label{eq:ddgspec}
K^1,\sqrt{K-\lambda_1}^{f_1},-\sqrt{K-\lambda_1}^{f_2},
\sqrt{K^2 -\lambda_2 V}^{g_1},-\sqrt{K^2 -\lambda_2 V}^{g_2},    
\end{equation}
where 
\begin{equation}\label{eq:ddgmult}
f_1 + f_2 = m(n-1),\quad g_1 + g_2 = m-1.   
\end{equation}
Note, however, that some of these eigenvalues may coincide or 
be missing if their multiplicities are equal to $0$.

\section{Proof of Theorem \ref{theo:main}}\label{sect:3}

In this section we suppose that $\Gamma$ satisfies the hypothesis of 
Theorem \ref{theo:main} and let $\Delta$ denote a divisible design graph 
with parameters $(V,K,\lambda_1,\lambda_2;m,n)$,  
induced on $\Gamma\setminus C$. 
Put $c=\frac{vs}{s-k}$. 
Proposition~\ref{vd} and 
Eq. \eqref{eq:ddgspec} then imply that the following two multisets: 
$$\{(k + s)^1, r^{f-c+1}, (r + s)^{c-1}, s^{g-c}\}$$
and 
$$\{K^1,\sqrt{K-\lambda_1}^{f_1},-\sqrt{K-\lambda_1}^{f_2},\sqrt{K^2 -\lambda_2 V}^{g_1},-\sqrt{K^2 -\lambda_2 V}^{g_2}\}$$
coincide. 
We will consider this situation in a series of lemmas below.
Recall (see \cite[Theorem~9.1.2]{BH}) that a regular graph (not complete or edgeless) 
is strongly regular if and only if its spectrum has precisely three distinct eigenvalues. 
Since we assume that $\Delta$ is a proper 
divisible design graph, $c<g$ holds, i.e., the spectrum of 
$\Gamma\setminus C$ contains four distinct eigenvalues. 
It then follows that in Eq. \eqref{eq:ddgspec} 
one of the multiplicities ($f_1,f_2,g_1$, or $g_2$) vanishes 
or some two of the four eigenvalues (distinct from $K$) coincide.
In the former case, the possible spectra of $\Delta$,  
corresponding to $r^{f-c+1}$, $(r + s)^{c-1}$, $s^{g-c}$, 
are given in Table \ref{tb:8cases}, where we order the eigenvalues in the descending order
(as $r>r+s>s$). In the latter case, since $\Delta$ has 
four distinct eigenvalues, 
\begin{center}
either 
$\sqrt{K-\lambda_1}=-\sqrt{K-\lambda_1}=0$ or $\sqrt{K^2 -\lambda_2 V}=-\sqrt{K^2 -\lambda_2 V}=0$    
\end{center}
holds, which implies that $r+s=0$. Note that this equality 
also holds in Cases (2), (3), (6), (7) from Table \ref{tb:8cases}. 
In Lemmas \ref{lm:case1}--\ref{lm:case67} below, we first consider 
the cases with $r+s\ne 0$ from Table \ref{tb:8cases}. 

\begin{table}[h]
\caption{The possible spectra of $\Delta$ with one of the multiplicities vanished.}\label{tb:8cases}
\begin{tabular}{ccccc}
(1) & $\sqrt{K-\lambda_1}^{f_1}$    & $\sqrt{K^2-\lambda_2V}^{g_1}$ & $-\sqrt{K^2-\lambda_2V}^{g_2}$ & $-\sqrt{K-\lambda_1}^{0}$    \\
(2) & $\sqrt{K-\lambda_1}^{f_1}$    & $\sqrt{K^2-\lambda_2V}^{g_1}$ & $-\sqrt{K^2-\lambda_2V}^{0}$ & $-\sqrt{K-\lambda_1}^{f_2}$    \\
(3) & $\sqrt{K-\lambda_1}^{f_1}$    & $\sqrt{K^2-\lambda_2V}^{0}$ & $-\sqrt{K^2-\lambda_2V}^{g_2}$ & $-\sqrt{K-\lambda_1}^{f_2}$    \\
(4) & $\sqrt{K-\lambda_1}^{0}$    & $\sqrt{K^2-\lambda_2V}^{g_1}$ & $-\sqrt{K^2-\lambda_2V}^{g_2}$ & $-\sqrt{K-\lambda_1}^{f_2}$    \\
(5) & $\sqrt{K^2-\lambda_2V}^{g_1}$ & $\sqrt{K-\lambda_1}^{f_1}$    & $-\sqrt{K-\lambda_1}^{f_2}$    & $-\sqrt{K^2-\lambda_2V}^{0}$ \\
(6) & $\sqrt{K^2-\lambda_2V}^{g_1}$ & $\sqrt{K-\lambda_1}^{f_1}$    & $-\sqrt{K-\lambda_1}^{0}$    & $-\sqrt{K^2-\lambda_2V}^{g_2}$ \\
(7) & $\sqrt{K^2-\lambda_2V}^{g_1}$ & $\sqrt{K-\lambda_1}^{0}$    & $-\sqrt{K-\lambda_1}^{f_2}$    & $-\sqrt{K^2-\lambda_2V}^{g_2}$ \\
(8) & $\sqrt{K^2-\lambda_2V}^{0}$ & $\sqrt{K-\lambda_1}^{f_1}$    & $-\sqrt{K-\lambda_1}^{f_2}$    & $-\sqrt{K^2-\lambda_2V}^{g_2}$
\end{tabular}
\end{table}

\begin{lemma}\label{lm:fg}
The following equalities hold:
\[
f=\frac{1}{2}\left(v-1-\frac{2k+(v-1)(r+s)}{r-s}\right), \quad  
g=\frac{1}{2}\left(v-1+\frac{2k+(v-1)(r+s)}{r-s}\right).
\]
\end{lemma}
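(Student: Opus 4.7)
The plan is to derive both formulas as the unique solution of a $2\times 2$ linear system that encodes two elementary invariants of the spectrum of $\Gamma$. First, since $\Gamma$ has $v$ vertices and its spectrum is $k^1, r^f, s^g$, the multiplicities must sum to $v$, giving
\[ f + g = v - 1. \]
Second, the adjacency matrix of any simple graph has zero diagonal, so the trace identity forces the sum of eigenvalues weighted by multiplicity to vanish:
\[ k + fr + gs = 0. \]

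I will substitute $g = v-1-f$ from the counting equation into the trace equation, obtaining $f(r-s) = -k - (v-1)s$. Since $\Gamma$ is primitive strongly regular, $r\ne s$, so division by $r-s$ is legitimate and $f$ is uniquely determined; the formula for $g$ then follows by $g = v - 1 - f$. To match the symmetric form stated in the lemma, I will rewrite $-2(v-1)s$ as $(v-1)(r-s)-(v-1)(r+s)$ after clearing a factor of $2$ in the numerator and denominator of $f = \frac{-k-(v-1)s}{r-s}$; this yields the claimed expression $f = \tfrac12\bigl(v-1 - \tfrac{2k+(v-1)(r+s)}{r-s}\bigr)$, and the analogous rearrangement produces the companion formula for $g$.

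No genuine obstacle is anticipated here: the lemma is a routine consequence of two standard linear relations satisfied by the multiplicities of the eigenvalues of any strongly regular graph, and primitivity is invoked only to ensure $r \ne s$ so that the linear system is non-degenerate.
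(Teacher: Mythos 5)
Your proposal is correct and follows essentially the same route as the paper: both rest on the two standard relations $1+f+g=v$ and $k+fr+gs=0$, the paper merely noting that the stated formulas satisfy them while you solve the system explicitly (and your algebraic rearrangement to the symmetric form checks out).
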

\begin{proof}
The multiplicities $f$ and $g$ of the eigenvalues $r$ and $s$, respectively, 
can be found from the equations $1 + f + g = v$ and $k + fr + gs = 0$ 
(see \cite[Section~1.1.4]{BH}). Thus, it suffices to verify that 
$f$ and $g$ given in the lemma satisfy these equations.
\end{proof}

It will be handy to use the following identity:
\[
f-g=
-\frac{2k+(v-1)(r+s)}{r-s}.
\]

\begin{lemma}\label{lm:prop3.2}
{\rm (\cite[Proposition 3.2]{HKM})}  
The following inequality holds:
\[
0\leq K + (g_1 - g_2)\sqrt{K^2-\lambda_2V}\leq m(n - 1).
\]
\end{lemma}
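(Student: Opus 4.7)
The plan is to exploit an equitable partition hidden in the DDG axioms and compute the trace of its quotient matrix in two different ways: combinatorially (via within-class degrees) and spectrally (via the claimed eigenvalue multiplicities).

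First I would recall the standard identity
\[
A^2=(K-\lambda_1)I+(\lambda_1-\lambda_2)NN^T+\lambda_2 J,
\]
obtained by counting common neighbours according to the DDG property; here $A$ is the adjacency matrix of $\Delta$, $N$ is the $V\times m$ characteristic matrix of the canonical partition (so $N^TN=nI_m$ and $NN^T$ is the indicator of ``same class''), and $J$ is the all-ones matrix. Since $\Delta$ is a proper DDG, $\lambda_1\ne\lambda_2$, and the identity solves for $NN^T$ as a linear combination of $I$, $A^2$, and $J$. Each of these commutes with $A$ (the last by regularity of $\Delta$), so $NN^T$ commutes with $A$; therefore the range of $N$ is $A$-invariant, i.e., the canonical partition is equitable. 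Let $B$ be the resulting $m\times m$ quotient matrix, characterised by $AN=NB$, so that $B=\tfrac{1}{n}N^TAN$.

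The next step is to compute $\operatorname{tr}(B)$ in two ways. Combinatorially, $B_{ii}$ equals the (now well-defined) number of neighbours a vertex in the $i$-th class has inside that class, which is an integer in $[0,n-1]$; summing over $i$ gives $0\le\operatorname{tr}(B)\le m(n-1)$. Spectrally, since every eigenvector $c$ of $B$ lifts via $N$ to an eigenvector $Nc$ of $A$ with the same eigenvalue (using $AN=NB$ together with the injectivity of $N$), the spectrum of $B$ is exactly the spectrum of $A$ restricted to the range of $N$. That subspace contains $\mathbf{1}$, which contributes the eigenvalue $K$, and has an $(m-1)$-dimensional complement on which $A^2$ acts as the scalar $K^2-\lambda_2 V$ (read off from the identity above); hence the remaining eigenvalues of $B$ are $\pm\sqrt{K^2-\lambda_2 V}$ with multiplicities $g_1$ and $g_2$ respectively. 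Taking the trace then gives $\operatorname{tr}(B)=K+(g_1-g_2)\sqrt{K^2-\lambda_2 V}$, and combining the two expressions for $\operatorname{tr}(B)$ yields the claim.

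The main obstacle in this plan is the first step: verifying that the canonical partition of $\Delta$ is equitable. This is where properness enters in a crucial way (through $\lambda_1\ne\lambda_2$), and without it one cannot solve for $NN^T$ from the $A^2$-identity above. Once equitability is in hand the remainder is a routine two-way trace computation; the only minor point to be careful about is the identification of the multiplicities of $\pm\sqrt{K^2-\lambda_2 V}$ in $B$ with the parameters $g_1$ and $g_2$ attached to $\Delta$, which is immediate since the corresponding eigenvectors of $A$ are precisely those constant on each class.
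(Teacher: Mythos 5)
Your proof is correct. The paper itself gives no argument for this lemma --- it is quoted verbatim from [HKM, Proposition~3.2] --- and your derivation (equitability of the canonical partition via the $A^2$ identity, then computing $\operatorname{trace}(B)$ once combinatorially from the within-class degrees and once spectrally from the eigenvalues $K,\pm\sqrt{K^2-\lambda_2V}$ of the quotient) is exactly the standard proof of that cited result, and it matches the quotient-matrix framework the paper recalls in Section~4.1.
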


\begin{lemma}\label{lm:K}
    $K=k+s$.
\end{lemma}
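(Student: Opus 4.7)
The plan is to use the equitable partition property of Hoffman cocliques. When a coclique $C$ attains the bound $|C|=vs/(s-k)$, a standard argument (applied, e.g., to a $(-s)$-eigenvector of $\Gamma$ supported on $C$, or directly from the tight-equality case of the Delsarte--Hoffman inequality) shows that the partition $\{C,\,V(\Gamma)\setminus C\}$ is equitable, and moreover every vertex of $V(\Gamma)\setminus C$ has precisely $-s$ neighbours inside $C$. This is the only input I will need beyond what is already stated in the excerpt.

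Granting this, the lemma reduces to a one-line count: each vertex of $\Delta=\Gamma\setminus C$ has $k$ neighbours in the ambient graph $\Gamma$, exactly $-s$ of which lie in $C$, leaving $k-(-s)=k+s$ neighbours inside $\Delta$. Since $\Delta$ is $K$-regular by definition of a divisible design graph, this forces $K=k+s$.

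Equivalently, one can extract the same conclusion purely spectrally: Proposition \ref{vd} already presents $\Gamma\setminus C$ as a regular and connected graph whose spectrum begins with $(k+s)^1$, and the Perron--Frobenius theorem identifies the valency of a connected regular graph with its largest simple eigenvalue, so $K=k+s$ is immediate. I foresee no real obstacle here; this lemma is essentially a bookkeeping step unpacking information already encoded in Proposition \ref{vd} and setting up the substitutions $K=k+s$ to be used in comparing the spectra in the subsequent lemmas.
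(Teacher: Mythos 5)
Your first argument is exactly the paper's proof: tightness of the Delsarte--Hoffman bound forces every vertex outside $C$ to have precisely $-s$ neighbours in $C$, whence $K=k-(-s)=k+s$ by regularity of $\Delta$. The alternative spectral derivation via Proposition \ref{vd} and Perron--Frobenius is also valid, but the paper uses the counting version, so this is essentially the same approach and is correct.
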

\begin{proof}
As $C$ is a Hoffman coclique, every vertex from 
$\Gamma\setminus C$ is adjacent to precisely $-s$ vertices from $C$ 
(see, e.g., \cite[Proposition~1.1.7]{BW}). 
\end{proof}
\smallskip

\begin{lemma}\label{lm:case1}
    Case $(1)$ is not possible.
\end{lemma}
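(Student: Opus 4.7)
The plan is to combine the spectrum matching in Case $(1)$ with the trace identity and the lower bound of Lemma~\ref{lm:prop3.2} to force two incompatible bounds on the multiplicity $f_1$.

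First I would match the descending-order spectra. In Case~$(1)$ the three nontrivial eigenvalues of $\Delta$ listed in the table are $\sqrt{K-\lambda_1} > \sqrt{K^2-\lambda_2V} > -\sqrt{K^2-\lambda_2V}$, and these must coincide with $r > r+s > s$ from Proposition~\ref{vd}. This yields $r = \sqrt{K-\lambda_1}$, $r+s = \sqrt{K^2-\lambda_2 V}$, $s = -\sqrt{K^2-\lambda_2 V}$, so in particular $r = -2s$; the multiplicities give $f_1 = f-c+1$, $g_1 = c-1$, $g_2 = g-c$.

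Next, I would compute the quantity appearing in Lemma~\ref{lm:prop3.2}. Using Lemma~\ref{lm:K} ($K = k+s$), the identity $g_1 - g_2 = 2c - g - 1$, and the trace identity $k + fr + gs = 0$ which, together with $r = -2s$, gives $k = s(2f-g)$, I would substitute into
\[
K + (g_1 - g_2)\sqrt{K^2-\lambda_2V} = (k+s) - s(2c-g-1)
\]
and expect this to collapse to $2s(f+1-c)$ after collecting terms.

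The conclusion is then immediate: the lower bound in Lemma~\ref{lm:prop3.2} forces $2s(f+1-c) \geq 0$, and since $s < 0$, this means $f \leq c-1$, i.e., $f_1 = f-c+1 \leq 0$. But $\Delta$ has four distinct eigenvalues, so $\sqrt{K-\lambda_1} = r$ must appear with positive multiplicity in the spectrum of $\Delta$, whence $f_1 \geq 1$. The contradiction rules out Case~$(1)$. The main obstacle I foresee is the algebraic simplification of $(k+s) - s(2c-g-1)$ to $2s(f+1-c)$: the factor $f+1-c$ does not appear directly, and one must be willing to introduce the trace relation $k = s(2f-g)$ at the right moment to force the cancellations that strip away the dependence on $g$ and leave the clean expression.
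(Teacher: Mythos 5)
Your proof is correct, and the algebra checks out: with $r=-2s$, $K=k+s$, $g_1-g_2=2c-g-1$, $\sqrt{K^2-\lambda_2 V}=-s$ and the eigenvalue relation $k+fr+gs=0$ (i.e.\ $k=s(2f-g)$), one indeed gets $K+(g_1-g_2)\sqrt{K^2-\lambda_2 V}=2s(f+1-c)$, so the lower bound of Lemma~\ref{lm:prop3.2} forces $f_1=f-c+1\le 0$, contradicting $f_1\ge 1$ (which holds because, by Proposition~\ref{vd} and $c<g$, the eigenvalue $r$ genuinely occurs in the spectrum of $\Gamma\setminus C$). However, this is a genuinely different route from the paper's. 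The paper does not invoke Lemma~\ref{lm:prop3.2} here at all: it uses the multiplicity sums of Eq.~\eqref{eq:ddgmult} to get $g=g_1+g_2+1=m$ and $f=f_1+c-1=m(n-1)+c-1$, hence $f\ge g$, and then contradicts this with the sign of $f-g=\frac{2k-(v-1)s}{3s}<0$ coming from Lemma~\ref{lm:fg} and $r=-2s$. So the paper trades your quotient-matrix trace bound for the cruder multiplicity bookkeeping; your version is slightly more self-contained in that it never needs Eq.~\eqref{eq:ddgmult} or the case distinction $n=2$ versus $n>2$, while the paper's version avoids Lemma~\ref{lm:prop3.2}, reserving that tool for Cases $(5)$ and $(8)$ where it is genuinely needed. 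Both arguments are sound and of comparable length.
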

\begin{proof}
One has $r+s=\sqrt{K^2-\lambda_2V}$ and $s=-\sqrt{K^2-\lambda_2V}$, hence $r=-2s$. Further, 
$g_1=c-1$, $g_2=g-c$, hence 
$g_1+g_2=g-1=m-1$, and 
$f_2=0$, hence $f_1=m(n-1)$ by Eq. \eqref{eq:ddgmult}. Thus, $g<f$ or $n=2$, in which case $g=f$.
In either case, using $r=-2s$ and Lemma \ref{lm:fg} we obtain 
\begin{eqnarray*}
f-g=
\frac{2k+(v-1)(-s)}{3s}&\geq &0,
\end{eqnarray*}
a contradiction since $s<0$. 
\end{proof}

\begin{lemma}\label{lm:case4}
    Case $(4)$ is not possible.
\end{lemma}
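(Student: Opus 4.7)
My plan is to insert the Case (4) data into Lemma \ref{lm:prop3.2} and show that the resulting inequality is violated.

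Matching the spectra in Case (4), the ordering forces $r = \sqrt{K^2 - \lambda_2 V}$, $r + s = -\sqrt{K^2 - \lambda_2 V}$ (whence $s = -2r$), and $s = -\sqrt{K - \lambda_1}$, with multiplicities $g_1 = f - c + 1$, $g_2 = c - 1$, $f_1 = 0$, and $f_2 = g - c$. From $g_1 + g_2 = m - 1$ in Eq.~\eqref{eq:ddgmult} I deduce $m = f + 1$, and therefore $m(n - 1) = (v - c) - (f + 1) = g - c$.

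The pivotal step is to substitute these into the inequality
\[
0 \le K + (g_1 - g_2)\sqrt{K^2 - \lambda_2 V} \le m(n-1)
\]
of Lemma \ref{lm:prop3.2}. The middle expression equals $(k + s) + (f - 2c + 2)\,r$; using $s = -2r$ the constant $s + 2r$ cancels, leaving $k + fr - 2cr$, and then the trace identity $k + fr + gs = 0$ combined with $s = -2r$ rewrites $k + fr$ as $2gr$, reducing the whole expression to $2r(g - c)$. Thus Lemma \ref{lm:prop3.2} collapses to
\[
0 \;\le\; 2r(g - c) \;\le\; g - c.
\]

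This is immediately contradictory: $g - c \ge 1$ because $\Delta$ is a proper DDG, and $r$ is a positive integer (since $r = -(r+s) = \mu - \lambda \in \mathbb{Z}$ follows from $r + s = \lambda - \mu$ together with $s = -2r$, while $r > 0$ is automatic for primitive strongly regular graphs), so $r \ge 1$ and the upper bound $2r(g - c) \le g - c$ fails. The one delicate step is spotting the cancellation that collapses the middle expression to $2r(g - c)$; after that, the contradiction is immediate.
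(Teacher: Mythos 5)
Your proof is correct, but it takes a genuinely different route from the paper. The paper's argument for Case (4) uses the multiplicity identity of Lemma \ref{lm:fg}: from $s=-2r$ and $g=c+m(n-1)>f$ it deduces $f-g<0$, hence $r<\frac{2k}{v-1}<2$, forces $r=1$, $s=-2$ by integrality, and then must invoke the classification of strongly regular graphs with least eigenvalue $-2$ and check by hand that the $3\times 3$ grid, the Petersen graph and its complement admit no such decomposition. You instead feed the Case (4) data directly into the trace inequality of Lemma \ref{lm:prop3.2} (which the paper deploys only in Cases (5) and (8)): your computation that $K+(g_1-g_2)\sqrt{K^2-\lambda_2V}=(k+s)+(f-2c+2)r$ collapses to $2r(g-c)$ via $s=-2r$ and $k+fr+gs=0$, and that the upper bound is $m(n-1)=g-c$, is correct, and the resulting inequality $2r(g-c)\le g-c$ is indeed absurd since $g-c\ge 1$ (because $\Delta$ is proper, so $c<g$) and $r=\mu-\lambda\ge 1$ (your integrality argument via $r+s=\lambda-\mu$ and $s=-2r$ is valid, and $r>0$ holds for any primitive strongly regular graph since $rs=\mu-k<0$). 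Your approach buys a uniform, purely arithmetic contradiction with no appeal to the Seidel-type classification or to an ad hoc inspection of three small graphs; the paper's approach, while longer here, stays within the same two tools (Lemma \ref{lm:fg} and known parameter lists) it uses elsewhere in the case analysis.
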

\begin{proof}
    One has $r=\sqrt{K^2-\lambda_2V}$, $r+s=-\sqrt{K^2-\lambda_2V}$,  
    hence $s=-2r$. 
    Further, $g_1=f-c+1$, $g_2=c-1$, hence $g_1+g_2=f=m-1$, 
    and $f_1=0$, $f_2=g-c$, hence $f_1+f_2=g-c=m(n-1)$ 
    by Eq. \eqref{eq:ddgmult}. Thus, $g=c+m(n-1)>f$.

Using $s=-2r$ and Lemma \ref{lm:fg}, we obtain 
\begin{eqnarray*}
f-g=
\frac{2k+(v-1)(-r)}{-3r}&<&0,    
\end{eqnarray*}
whence $2k+(v-1)(-r)>0$, i.e., $r<\frac{2k}{v-1}<2$. 
Since $\Gamma$ has a Hoffman coclque, $s$ and hence $r$ are integral; thus, $r=1$ and $s=-2$. 
All strongly regular graphs with $s=-2$ are known 
(see \cite[Section~1.1.10]{BW} or \cite{S}); among them, 
only the $3\times 3$ grid $(9,4,1,2)$, the Petersen graph $(10,3,0,1)$ 
and its complement $(10,6,3,4)$ satisfy $r=1$. It is easily seen that 
these graphs do not admit a 
decomposition as in Theorem \ref{theo:main}.
\end{proof}

\begin{lemma}
    Case $(5)$ is not possible.
\end{lemma}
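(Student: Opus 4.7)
The plan is to derive a contradiction from the assumption that Case~(5) occurs. Matching the spectrum of $\Delta := \Gamma\setminus C$ from Proposition~\ref{vd} with the Case~(5) row of Table~\ref{tb:8cases}, one reads $r = \sqrt{K^2-\lambda_2V}$, $r+s = \sqrt{K-\lambda_1}$, and $s = -\sqrt{K-\lambda_1}$, whence $r+s = -s$ and $r = -2s$. Setting $t := -s > 0$, so that $r = 2t$, the multiplicity relations~\eqref{eq:ddgmult} together with $g_2 = 0$ give $f = m+c-2$ and $g = m(n-1)+1$. In contrast to Case~(1), Lemma~\ref{lm:fg} here yields only the weak inequality $f < g$, which is consistent with these multiplicities, so a sharper ingredient is required.

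First I would establish the lower bound $n \geq 2(t+1)$. Applying Lemma~\ref{lm:prop3.2} with $g_1 - g_2 = m - 1$ gives $K + 2t(m-1) \leq m(n-1)$, that is $k \leq mn - m(2t+1) + 3t$. Primitivity of $\Gamma$ forces $\mu \geq 1$, and the identity $rs = \mu - k$ gives $k = \mu + 2t^2 \geq 2t^2 + 1$; together these imply $2t^2 + 1 \leq 3t$ whenever $n \leq 2t+1$, forcing $t = 1$. But $s = -1$ makes $\Gamma$ a disjoint union of cliques, contradicting primitivity, so $t \geq 2$ and $n \geq 2(t+1)$. Next, the Hoffman-coclique edge-count from Lemma~\ref{lm:K} yields $ck = tV$, so $k = tV/c$; substituting $f = m+c-2$ into Lemma~\ref{lm:fg} and using $v = V+c$ then produces the identity $V(c-1) = c(3m+2c-5)$.

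Combining this identity with the lower bounds $V \geq 2m(t+1)$ (from $n \geq 2(t+1)$) and $V \geq c(2t^2+1)/t$ (from $\mu \geq 1$) yields bounds $m \geq \beta(c,t)$ and $m \leq \alpha(c,t)$ whose compatibility reduces, after clearing denominators, to $P(c) \leq 0$ where
\[
P(c) = (4t^3-6t^2-2t-1)c^2 + (-8t^3+12t^2+10t-1)c + (4t^3-6t^2-8t+2).
\]
The main obstacle is this polynomial inequality. Direct computation gives $P(1) = 0$, so $(c-1) \mid P(c)$; by Vieta's formula the remaining root equals $(4t^3-6t^2-8t+2)/(4t^3-6t^2-2t-1)$, which is $-2$ for $t = 2$ and lies in $(0,1)$ for $t \geq 3$ (for $t \geq 3$ both numerator and denominator are positive, and their difference $-6t+3$ is negative, so numerator is strictly smaller). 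Since the leading coefficient is positive for $t \geq 2$, one obtains $P(c) > 0$ for every $c \geq 2$; the residual case $c = 2$ is excluded by direct inspection of the upper bound, since the coefficient $c(2t-1) - 2(t+1)$ is either zero or leads to a negative right-hand side in $m[c(2t-1) - 2(t+1)] \leq c(2c-5)$. This contradicts $P(c) \leq 0$ and completes the proof.
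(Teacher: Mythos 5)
Your proof is correct, but it takes a genuinely different route from the paper's. Both arguments start from the same spectral identification ($r=-2s$, $f=m+c-2$, $g=m(n-1)+1$) and both derive the same relation between $V$, $c$ and $m$ (your identity $V(c-1)=c(3m+2c-5)$ is the paper's quadratic $2c^2-c(mn-3m+5)+mn=0$ in disguise). From there the paper splits according to which root of that quadratic $c$ is: for the larger root it uses $\lambda_1\geq 0$ to force $s\in\{-2,-3\}$ and then disposes of the finitely many candidates by invoking Seidel's classification of strongly regular graphs with least eigenvalue $-2$ and Neumaier's enumeration for $-3$, checking specific parameter sets one by one; for the smaller root it uses the trace inequality of Lemma \ref{lm:prop3.2} to force $s\geq -1$. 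You instead run a uniform argument: Lemma \ref{lm:prop3.2} together with $\mu\geq 1$ gives $-s\geq 2$ and $n\geq 2(1-s)$, and then the two resulting lower bounds on $V$, fed into the identity, give incompatible bounds on $m$, packaged as the quadratic inequality $P(c)\leq 0$ which fails for all $c\geq 2$ (I checked the expansion of $P$, the factor $c-1$, the location of the second root, and the sign of the leading coefficient for $-s\geq 2$; the degenerate case $c=2$, where the denominator $c(2s\!-\!1)\dots$ — in your notation $c(2t-1)-2(t+1)$ — may vanish, is indeed settled directly since the right-hand side $c(2c-5)=-2$ is negative). What your approach buys is independence from the external classification results and from the ad hoc verification of individual graphs such as $(27,16,10,8)$ or $(36,10,4,2)$; what it costs is a heavier, less transparent polynomial computation that a reader must verify by hand. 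Both are valid; yours is arguably more self-contained, the paper's more easily checkable step by step.
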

\begin{proof}
One has $r=\sqrt{K^2-\lambda_2V}$, $r+s=\sqrt{K-\lambda_1}$, $s=-\sqrt{K-\lambda_1}$, hence $r=-2s$. 
Further, $g_1=f-c+1$, $g_2=0$, $f_1=c-1$ and $f_2=g-c$. It follows from Eq. 
\eqref{eq:ddgmult} that 
$g=m(n-1)+1$ and $f=m+c-2$.
Using Lemma \ref{lm:fg}, we obtain 
$k=(3m+3c-5-v)s$.

Further, as $c=vs/(s-k)$ and $v=mn+c$, 
we find that $c$ satisfies the equation
$2c^2-c(mn-3m+5)+mn=0$, whence 
\begin{equation}\label{eq:c}
c=c_{1,2}=\frac{(mn-3m+5) \pm \sqrt{(mn-3m+5)^2-8mn}}{4}.    
\end{equation}

(1) Suppose first that $c=c_1$. Since 
$s^2=K-\lambda_1$, it follows by Lemma \ref{lm:K} that $k=s^2-s+\lambda_1\geq s^2-s$. 
Therefore, $c=vs/(s-k)\leq v/(2-s)$ and $s\geq 2-v/c=1-mn/c$. 
We claim that $s>-4$. 
Indeed, if $1-mn/c\leq -4$, then
\[
mn\geq 5c=
\frac{5}{4}\left((mn-3m+5) + \sqrt{(mn-3m+5)^2-8mn}\right),
\]
which does not hold for integral $m>1$ and $n>1$. 
Since $\Gamma$ contains a Hoffman coclique, $s$ must be an integer; 
thus, $s\in \{-2,-3\}$. 

Strongly regular graphs with $s=-2$ were determined by Seidel \cite{S}. 
In particular, if $s=-2$, $r=4$, then $\Gamma$ has one of the following 
parameter sets $(v,k,\lambda,\mu)$: 
\begin{itemize} 
    \item $(27,16,10,8)$ with $f=6$ and $g=20$. 
    Then $c=27(-2)/(-2-16)=3$. Further, it follows from $f = m + c-2$ 
    that $m=5$, which does not divide $mn=v-c=24$, a contradiction.
    \item $(28,12,6,4)$ with $f=7$ and $g=20$. 
    Then $c=28(-2)/(-2-14)=4$. It follows 
    that $m=5$, which does not divide $mn=v-c=24$.
    \item $(36,10,4,2)$. In this case $\Gamma$ is the $6\times 6$ grid, 
    which contains a Hoffman coclique $C$ of size $6$. 
    However, one can easily check that $\Gamma\setminus C$ is 
    not a divisible design graph.
\end{itemize}

Neumaier \cite[Page 399]{Neu} noted that there are exactly 64 parameter sets 
for strongly regular graphs with the least eigenvalue $s = -3$ and $\mu \notin \{k, 6, 9 \}$. 
They can be determined, e.g., by using the GAP package AGT \cite{Eva22}. 
Clearly, $\mu\ne k$, as $\Gamma$ is primitive, and if $k\in \{6, 9 \}$, the parameters of $\Gamma$ can be determined from 
the equations $k-\mu=-rs$ and
$\lambda=\mu+r+s$ (see \cite[Section~1.1.1]{BW}). 
In particular, if $s=-3$, $r=6$, then $\Gamma$ has one of the following 
parameter sets $(v,k,\lambda,\mu)$: 
\begin{itemize}
    \item $(70,27,12,9)$ with $f=20$ and $g=49$. 
    Then $c=70(-3)/(-3-27)=7$. It follows that $m=15$, 
    which does not divide $v-c=63$.
    \item $(81,60,45,42)$: this graph does not contain 
    a Hoffman coclique, as $81(-3)/(-3-60)$ is not integral.
    \item $(81,24,9,6)$ with $f=24$ and $g=56$. 
    Then $c=81(-3)/(-3-24)=9$. It follows that $m=17$, 
    which does not divide $v-c=72$.
\end{itemize}

(2) Suppose now that $c=c_2$ (see Eq. \eqref{eq:c}). 
Since $g_1-g_2=m-1$ and 
$r=\sqrt{K^2-\lambda_2V}=-2s$, 
it follows from Lemmas \ref{lm:prop3.2}, \ref{lm:K} 
that 
\[
k+s+(m-1)(-2s)\leq m(n-1).
\] 
Substituting $k=(3m+3c-5-v)s$ into the latter inequality, we obtain 
\[
s(-m(n-1)+2c-2)\leq m(n-1).
\]
Note that $-m(n-1)+2c-2<0$, 
as 
\[
c=\frac{(mn-3m+5) - \sqrt{(mn-3m+5)^2-8mn}}{4}<\frac{m(n-3)+5}{2}.
\]
Therefore, 
\[
s\geq -\frac{m(n-1)}{m(n-1)-2(c-1)}.
\]
We claim that $s>-2$. 
Indeed, if, on the contrary, 
$\frac{m(n-1)}{m(n-1)-2(c-1)}\geq 2$, then:
\begin{align*}
m(n-1)\geq 2(m(n-1)-2(c-1)),\\
m(n-1)\geq 2m(n-1)-\left((mn-3m+1) - \sqrt{(mn-3m+5)^2-8mn}\right),\\
m(n-1)\geq m(n+1)-1 + \sqrt{(mn-3m+5)^2-8mn},
\end{align*}
which does not hold. 
Again, as $s$ must be an integer, it follows that $s\geq -1$, i.e., 
$\Gamma$ is a complete graph, a contradiction. 
The lemma is proved. 
\end{proof}

\begin{lemma}
    Case $(8)$ is not possible.
\end{lemma}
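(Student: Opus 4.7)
The plan is to follow the strategy of Case~(5), adapted to the Case~(8) identifications: $r=\sqrt{K-\lambda_1}$, $r+s=-\sqrt{K-\lambda_1}$, and $s=-\sqrt{K^2-\lambda_2V}$, so that $s=-2r$. The multiplicities $f_1=f-c+1$, $f_2=c-1$, $g_1=0$, $g_2=g-c$, combined with \eqref{eq:ddgmult}, yield $f=m(n-1)$ and $g=m+c-1$. Using Lemma~\ref{lm:fg} together with $r+s=-r$ and $r-s=3r$, and the combinatorial identity $f-g=mn-2m-c+1$, one derives $k=r(3m+2c-2-mn)$; equating this with $k=2rmn/c$ (from the Hoffman identity $c(s-k)=vs$ and $v=mn+c$) yields the quadratic $2c^2-(mn-3m+2)c-2mn=0$, whose unique positive root is $c=\tfrac{(mn-3m+2)+\sqrt{(mn-3m+2)^2+16mn}}{4}$.

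Next I would apply Lemma~\ref{lm:prop3.2} with $g_1-g_2=-(m-1)$ and $\sqrt{K^2-\lambda_2V}=-s$: its lower bound reads $k+ms\ge 0$, equivalently $n\ge c$. After isolating the square root and squaring, this simplifies to $(m-2)(n-1)\le 0$, so for a proper DDG (with $m,n\ge 2$) we must have $m=2$, and the bound is tight: $n=c$. Substituting back gives $v=3n$, $k=4r$, $K=2r$, and $\lambda_1=r(2-r)\ge 0$ restricts $r\in\{1,2\}$; the case $r=2$ yields $\mu=0$, contradicting primitivity, so $r=1$ and $s=-2$. The standard SRG equation $k(k-\lambda-1)=(v-k-1)\mu$ then forces $v=9$, identifying $\Gamma$ with the $3\times 3$ rook graph $(9,4,1,2)$.

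The main obstacle will be the last step: excluding this single candidate explicitly. As in the dismissal at the end of Case~(4), I would verify by direct inspection that this SRG does not admit a decomposition of the form prescribed by Case~(8). The Hoffman cocliques here are the transversals of size~$3$, and each such $C$ gives $\Gamma\setminus C$ isomorphic to the $6$-cycle; the only proper DDG structure on a $6$-cycle is the bipartition, which yields parameters $(6,2,1,0;2,3)$ with $\lambda_2=0$. This means $\sqrt{K^2-\lambda_2V}=K$ coincides with the Perron eigenvalue and equality holds in the lower bound of Lemma~\ref{lm:prop3.2}; one needs to argue that this degenerate configuration is incompatible with the separation of eigenvalues assumed in Case~(8) (where $\sqrt{K^2-\lambda_2V}$ is a distinct eigenvalue with multiplicity $g_1=0$), thereby obtaining the desired contradiction.
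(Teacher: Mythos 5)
Your derivation up to the identification of the unique surviving candidate is correct and essentially identical to the paper's: the same quadratic for $c$, the same use of Lemma~\ref{lm:prop3.2} to get $(m-2)(n-1)\le 0$, hence $m=2$, $c=n$, $k=4r$, and then $r\in\{1,2\}$ with $r=2$ excluded and $r=1$ forcing the $(9,4,1,2)$ graph (your route via $\lambda_1=r(2-r)\ge 0$ and $\mu=0$ is a harmless variant of the paper's $\mu=4r-2r^2$ and its appeal to feasible parameter sets).

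The genuine gap is the final exclusion step, and your own analysis shows why it cannot be closed in the way you hope. As you observe, for a transversal $C$ the graph $\Gamma\setminus C$ is the hexagon, and the bipartition into its two distance-$2$ triples makes it a proper divisible design graph with parameters $(6,2,1,0;2,3)$. Its spectrum $2^1,1^2,(-1)^2,(-2)^1$ fits Case~(8) exactly, with $f_1=f_2=2$, $g_1=0$, $g_2=1$: the coincidence $\sqrt{K^2-\lambda_2V}=2=K$ is explicitly permitted by the remark following Eq.~\eqref{eq:ddgspec} (eigenvalues may coincide or be missing when a multiplicity vanishes), and Lemma~\ref{lm:prop3.2} holds there with equality rather than being violated. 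So there is no ``incompatibility with the separation of eigenvalues'' to be extracted; the $3\times 3$ grid minus a transversal genuinely realizes Case~(8). Be aware that the paper's own proof has the same defect: it dismisses $r=1$ by appealing to the ``easily seen'' claim at the end of the proof of Lemma~\ref{lm:case4}, which fails for the grid. What your computation actually uncovers is that $(9,4,1,2)$ is an exceptional decomposition not covered by the conclusion of Theorem~\ref{theo:main} (there $k=(-s)n$ would force $n=2=-s$, so $n+s=0$). The lemma therefore needs either an explicit exception for the $3\times 3$ grid or an additional hypothesis; as written, your proposal cannot be completed.
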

\begin{proof}
    One has $r=\sqrt{K-\lambda_1}$, $r+s=-\sqrt{K-\lambda_1}$, $s=-\sqrt{K^2-\lambda_2V}$, hence $s=-2r$. Further, 
    $f_1=f-c+1$, $f_2=c-1$, $g_1=0$, $g_2=g-c$. 
    It follows from Eq. \eqref{eq:ddgmult} that 
    $f=m(n-1)$ and $g=m+c-1$.  
    Using Lemma \ref{lm:fg}, we obtain 
    $k=(-s/2)(-mn+3m+2c-2)$.

    Further, from 
    \[
    c=\frac{vs}{s-k}=\frac{(mn+c)s}{(s-(-s/2)(-mn+3m+2c-2))},
    \]
    we find that $c$ satisfies the equation
    $2c^2-c(mn-3m+2)-2mn=0$, whence (as $c>0$)
    \[
    c=\frac{(mn-3m+2) + \sqrt{(mn-3m+2)^2+16mn}}{4}.
    \]
    Since $g_1-g_2=-(g-c)$ and $s=-\sqrt{K^2-\lambda_2V}$, it follows 
    from Lemmas \ref{lm:prop3.2}, \ref{lm:K} that
    \begin{equation}\label{eq:case81}
    0\leq k+s+(g-c)s=k+ms,    
    \end{equation}
    whence $m\leq -k/s$ (recall that $s<0$). 
    Substituting $k=(-s/2)(-mn+3m+2c-2)$ into the latter inequality, we obtain
    \begin{align}
    mn\leq m+2c-2,\label{eq:case82}\\
    mn \leq m + \frac{(mn-3m+2)+\sqrt{(mn-3m+2)^2+16mn}}{2} - 2,\nonumber\\
    mn+m+2 \leq \sqrt{(mn-3m+2)^2+16mn},\nonumber
    \end{align}
    hence $(n-1)(m-2) \leq 0$. Since $m>1$ and $n>1$, 
    the only solution is $m=2$ and then $c=n$. 
    Moreover, since Eq. \eqref{eq:case82} holds with equality, 
    so does Eq. \eqref{eq:case81}; thus, $k=-2s=4r$. 
    Recall that the parameter $\mu$ of $\Gamma$ satisfies 
    $\mu=k+rs$ (see \cite[Section~1.1.1]{BW}); 
    hence $\mu=4r+r(-2r)=4r-2r^2$ and $r\leq 2$. 
    If $r=1$, then $\Gamma$ is the $3\times 3$ grid $(9,4,1,2)$, 
    a contradiction as in Lemma \ref{lm:case4}. 
    If $r=2$, then $k\leq 8$; there are no feasible parameters 
    sets of strongly regular graphs, satisfying these conditions, 
    a contradiction.
\end{proof}
\smallskip

In order to proceed with the remaining cases when $r+s=0$, let us recall 
that a {\bf Deza graph} \cite{EFHHH98,DezaSurvey,DezaPage} with parameters $(v,k,b,a)$ 
is a graph on $v$ vertices, which is $k$-regular and every two different vertices have 
either $b$ or $a$ common neighbours (we always assume that $b\geq a$). 
For example, a divisible design graph is a Deza graph. 
Also recall that the {\bf composition} $G_1[G_2]$
of graphs $G_1$ and $G_2$ (with vertex sets $V_1,V_2$, respectively) 
is a graph on $V_1\times V_2$, in which the adjacency relation $\sim$ is 
defined by $(x_1,x_2)\sim (y_1,y_2)$ if and only if $x_1\sim_{G_1} y_1$ or 
$(x_1=y_1$ and $x_2\sim_{G_2} y_2)$.

\begin{prop}\label{k=b}
{\rm (\cite[Theorem 2.6]{EFHHH98})} 
Let $D$ be an $(v,k,b,a)$ Deza graph. Then $b=k$ if and only if $D$ is 
the composition $G_1[G_2]$, where $G_1$ is a strongly regular graph 
with parameters $(v_1,k_1,\lambda,\mu)$ with $\lambda=\mu$, and $G_2$ 
is $\overline{K_{v_2}}$ for some integers $v_1$, $k_1$, $\lambda$, $v_2$. 
Moreover, the parameters of $D$ satisfy $v=v_1v_2$, $k=b=k_1v_2$, 
$a=\lambda v_2$ and $v_2=\cfrac{k^2-a v}{k-a}$.
\end{prop}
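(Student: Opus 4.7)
For the ``if'' direction, I would verify directly that $D = G_1[\overline{K_{v_2}}]$ is a Deza graph. Two vertices lying in the same fibre (i.e., with the same $V_1$-coordinate) share exactly the $k_1 v_2 = k$ neighbours in the fibres above the $k_1$ $G_1$-neighbours of that coordinate, while two vertices with distinct $V_1$-coordinates $x_1, y_1$ share $\lambda v_2$ or $\mu v_2$ common neighbours according as $x_1 \sim_{G_1} y_1$ or not. Because $\lambda = \mu$, these two counts coincide and both equal $a := \lambda v_2$, so $D$ is a $(v_1 v_2,\, k_1 v_2,\, k_1 v_2,\, \lambda v_2)$-Deza graph with $b = k$, and the parameter relations of the proposition are immediate.

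For the ``only if'' direction, suppose $D$ is a $(v, k, b, a)$-Deza graph with $b = k$. Since $D$ is $k$-regular, two distinct vertices share $k$ common neighbours iff they have identical open neighbourhoods. I would therefore introduce the equivalence relation $x \equiv y \iff N(x) = N(y)$ and call its classes \emph{fibres}: each fibre is a coclique (as $y \sim x \equiv y$ would force $y \in N(y)$), and for any two distinct fibres $X, Y$ the induced bipartite subgraph is either $K_{|X|,|Y|}$ or empty, because $y \sim x$ propagates to all of $[x]$ and then to all of $[y]$. This yields a well-defined quotient graph $G_1$ on the set of fibres such that $D$ is a blow-up of $G_1$ by independent sets of the corresponding fibre sizes.

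The key step is to show that all fibres have the same size. The Deza condition translates to the matrix identity $A^2 = aJ + (k-a)P$, where $A$ is the adjacency matrix of $D$ and $P_{xy} = 1$ iff $x \equiv y$. Multiplying both sides by $A$ on the left and right and using $AJ = JA = kJ$ gives $(k-a)(AP - PA) = 0$; since $a < k$ (otherwise $D$ would be edgeless), $A$ and $P$ commute. A direct entry-wise computation yields $(AP)_{xy} = |[y]|\, B_{[x][y]}$ and $(PA)_{xy} = |[x]|\, B_{[x][y]}$ for $[x] \neq [y]$, where $B$ is the $0/1$-adjacency matrix of $G_1$; hence any two fibres adjacent in $G_1$ have the same size, and connectivity of $D$ (assumed, else the argument runs componentwise) forces a single common fibre size $v_2$.

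Once $|X| = v_2$ for every fibre $X$, writing $A = B \otimes J_{v_2}$ and $P = I_m \otimes J_{v_2}$ in the block order converts $A^2 = aJ + (k-a)P$ into $v_2\, B^2 = a J_m + (k-a) I_m$. This is exactly the defining relation of a strongly regular graph with $k_1 = k/v_2$ and $\lambda = \mu = a/v_2$; substituting $k = k_1 v_2$, $a = \mu v_2$, $v = v_1 v_2$ into the standard identity $k_1(k_1-1) = (v_1-1)\mu$ (valid whenever $\lambda = \mu$) then rearranges to $v_2 = (k^2 - av)/(k - a)$, completing the parameter count. The main obstacle is the equal-fibre-size step; recognising the clean identity $A^2 = aJ + (k-a)P$ and exploiting the commutation $AP = PA$ it produces is the crucial trick.
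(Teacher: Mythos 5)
The paper does not prove Proposition \ref{k=b} at all: it is quoted verbatim from \cite[Theorem~2.6]{EFHHH98}, so there is no in-paper argument to compare yours against. Judged on its own, your proof is essentially correct and is close in spirit to the standard one. The ``if'' direction and the identification of the fibres $[x]=\{y: N(y)=N(x)\}$ as cocliques that are pairwise completely joined or completely non-joined are fine, as is the reduction $v_2B^2=aJ_m+(k-a)I_m$ once the fibres are known to have equal size. The one genuine soft spot is the equal-fibre-size step: your commutation argument $AP=PA$ only equalizes the sizes of \emph{adjacent} fibres, and your parenthetical ``else the argument runs componentwise'' does not close the disconnected case, where distinct components could a priori carry distinct fibre sizes and the conclusion $D=G_1[\overline{K_{v_2}}]$ with a single $v_2$ would not follow. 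This is easily repaired, and more cheaply than via commutation: take row sums in your own identity $A^2=aJ+(k-a)P$. Since $A^2\mathbf{1}=k^2\mathbf{1}$, every row gives $k^2=av+(k-a)\,|[x]|$, hence $|[x]|=(k^2-av)/(k-a)$ for \emph{every} vertex $x$, which simultaneously proves that all fibres have the same size (no connectivity needed) and produces the formula for $v_2$ directly, rather than deriving it afterwards from $k_1(k_1-1)=(v_1-1)\mu$. With that substitution your argument is complete; the remaining caveats ($a<k$ unless $D$ is edgeless, integrality of $k/v_2$ and $a/v_2$ from the blow-up structure) you either address or they are immediate.
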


Note that the composition $G_1[G_2]$ from Proposition \ref{k=b}
is a divisible design graph with parameters 
$(v_1v_2,k_1v_2,k_1v_2,\lambda v_2; v_1,v_2)$.

\begin{lemma}\label{lm:case67}
    $r,s\ne \pm \sqrt{K^2 -\lambda_2 V}$, in particular, 
    Cases $(6)$ and $(7)$ are not possible.
\end{lemma}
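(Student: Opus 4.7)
My plan is to exploit the identity $K=\lambda_1$ that is forced in Cases~$(6)$ and $(7)$: it creates ``twin'' vertices inside $\Delta$ whose presence in $\Gamma$ clashes with the primitivity of $\Gamma$.

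In both Cases~$(6)$ and $(7)$ (and in the analogous ``latter'' sub-case in which $\sqrt{K-\lambda_1}=0$), matching the descending ordered spectrum of $\Delta$ with $(k+s)^1, r^{f-c+1}, (r+s)^{c-1}, s^{g-c}$ forces $r=\sqrt{K^2-\lambda_2V}$ and $s=-\sqrt{K^2-\lambda_2V}$, so $r+s=0$; the middle eigenvalue~$0$, of positive multiplicity $c-1$, must then equal $\pm\sqrt{K-\lambda_1}$, yielding $K=\lambda_1$.

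Next I would use $\lambda_1=K$ inside the $K$-regular graph $\Delta$: any two distinct vertices $u,v$ in the same class of the canonical partition have $\lambda_1=K$ common neighbours while each has degree $K$, hence $N_\Delta(u)=N_\Delta(v)$; in particular $u\not\sim_\Delta v$ (as $\Delta$ is loopless), so $u$ and $v$ are twins. Such a pair exists because $\Delta$ is proper, so $n\geq 2$. Back in $\Gamma$, the non-adjacent pair $u,v$ has exactly $\mu$ common neighbours, of which the $K$ shared $\Delta$-neighbours account for part; the remaining $\mu-K$ common neighbours lie in $C$ and therefore form a non-negative integer.

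Finally, using the standard identity $\mu=k+rs$, Lemma~\ref{lm:K} (which gives $K=k+s$), and $r=-s$, one computes $\mu-K=rs-s=-s(s+1)$, so the inequality $\mu-K\geq 0$ becomes $s(s+1)\leq 0$. For a negative integer $s$ this forces $s=-1$, which makes $\Gamma$ either a disjoint union of cliques or a complete multipartite graph, both imprimitive; this contradicts the standing hypothesis and rules out Cases~$(6)$ and $(7)$. The main obstacle is recognising the twin-vertex structure forced by $\lambda_1=K$; once the eigenvalue equality is translated into a neighbourhood coincidence, the contradiction comes out of a short spectral calculation.
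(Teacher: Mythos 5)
Your proof is correct, but it takes a genuinely different route from the paper's. The paper also begins by extracting $r=-s=\sqrt{K^2-\lambda_2V}$ and $r+s=\sqrt{K-\lambda_1}=0$, i.e.\ $K=\lambda_1$, but then invokes the Deza-graph characterization (Proposition~\ref{k=b}) to write $\Delta$ as a composition $G_1[\overline{K_n}]$, reads off $c=m(n-1)+1$ from the multiplicity identity $f_1+f_2=m(n-1)=c-1$, derives $s=-k(mn-m+1)/(mn)$ from $c=vs/(s-k)$, and combines this with $k(k+2s)=\lambda_2V>0$ to reach the arithmetic contradiction $mn\le 2(m-1)$. You instead turn $\lambda_1=K$ directly into combinatorics: two vertices of a canonical class are non-adjacent twins in $\Delta$, so counting their common neighbours inside $\Gamma$ gives $\mu\ge K$, and with $\mu=k+rs$, $K=k+s$ (Lemma~\ref{lm:K}) and $r=-s$ this reads $-s(s+1)\ge 0$, forcing $s=-1$ and contradicting primitivity. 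Your argument is shorter and more elementary: it avoids Proposition~\ref{k=b}, the multiplicity bookkeeping, and the eigenvalue equation $s^2=K^2-\lambda_2V$ altogether. Two minor points worth making explicit in a write-up: the integrality of $s$ follows from the existence of a Hoffman coclique (as already noted in the proof of Lemma~\ref{lm:case4}), and in fact integrality is not even needed here, since any connected non-complete graph has least eigenvalue at most $-\sqrt{2}$, so $s\ge -1$ is already impossible for a primitive strongly regular graph.
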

\begin{proof}
    Since $r=-s=\sqrt{K^2 -\lambda_2 V}$, one has  
    $r+s=\sqrt{K-\lambda_1}=0$. 
Since $K=\lambda_1$, $\Delta$ is a Deza graph with $K=\lambda_1$ 
(where $\lambda_1$ plays the role of the parameter $b$). 
By Proposition \ref{k=b}, $\Delta$ is the composition $G_1[G_2]$ 
of a strongly regular graph $G_1$ and a coclique $G_2$. 
More precisely, since two vertices in $\Delta$ have $K$ common neighbours 
whenever they belong to the same copy of $G_2$, $G_1$ has 
parameters $(m,K/n,\lambda_2/n,\lambda_2/n)$ and $G_2$ is $\overline{K_{n}}$. 

Furthermore, $f_1 + f_2 = m(n-1) = c-1$ holds and hence $c = m(n-1)+1$. 
Since $c=(mn+c)s/(s-k)$, it follows that 
$s=-k(mn-m+1)/mn$.  
As $s=-\sqrt{K^2 -\lambda_2 V}$, we obtain 
$s^2=(k+s)^2-\lambda_2 V$ and $k(k+2s)=\lambda_2 V>0$, 
hence $k\geq -2s$.
Therefore, $k\geq 2k(mn-m+1)/mn$, whence 
$2(m-1)\geq mn$, contrary to $n>1$.
\end{proof}

\begin{lemma}\label{lm:maincase}
    Suppose that $r,s=\pm \sqrt{K -\lambda_1}$. Then 
    $c=m$ and the parameters of $\Gamma$ satisfy:
\[
v=\frac{(-s)(n^2-1)}{n+s},\quad 
k=(-s)n,\quad 
\lambda=\mu=(-s)(n+s).
\]
\end{lemma}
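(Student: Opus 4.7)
The plan is to extract the parameters of $\Gamma$ from the coincidence of the two spectral descriptions of $\Gamma \setminus C$ given by Proposition \ref{vd} and Eq. \eqref{eq:ddgspec}. First, I would observe that the hypothesis $r, s = \pm\sqrt{K - \lambda_1}$ together with $r > s$ forces $r = \sqrt{K - \lambda_1}$ and $s = -\sqrt{K - \lambda_1}$, and in particular $r + s = 0$. Thus the eigenvalue $0$ appears with multiplicity $c - 1$ in the spectrum of $\Gamma \setminus C$. Since $\sqrt{K - \lambda_1} = -s > 0$, the only way to reproduce this eigenvalue from Eq. \eqref{eq:ddgspec} is to have $\sqrt{K^2 - \lambda_2 V} = 0$ (i.e.\ $K^2 = \lambda_2 V$) with combined multiplicity $g_1 + g_2 = c - 1$, which is consistent with Eq. \eqref{eq:ddgmult}. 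The remaining eigenvalues then match as $f_1 = f - c + 1$ and $f_2 = g - c$.

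Next I would extract $c = m$. Combining the DDG identity $f_1 + f_2 = m(n - 1)$ with the SRG identity $f + g = v - 1$ gives $f_1 + f_2 = v - 2c$, while $m(n - 1) = V - m = (v - c) - m$ since $V = v - c$. Equating these yields $c = m$. With this in hand, rewriting the Hoffman relation $c(s - k) = vs$ as $ck = -(v - c)s = -mns$ and substituting $c = m$ delivers $k = (-s)n$.

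Finally, the standard SRG identities $\lambda - \mu = r + s$ and $\mu = k + rs$ (see \cite[Section~1.1.1]{BW}) immediately give $\lambda = \mu$ and $\mu = k - s^2 = -sn - s^2 = (-s)(n + s)$. The value of $v$ is then determined by substituting the now-known $k, \lambda, \mu$ into, e.g., $k(k - \lambda - 1) = (v - k - 1)\mu$ and solving for $v$, yielding $v = (-s)(n^2 - 1)/(n + s)$.

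No step presents a genuine conceptual difficulty; the argument is essentially bookkeeping of eigenvalues and multiplicities followed by routine algebra. If there is a main obstacle, it is the derivation $c = m$ in step two, since it is precisely this identity that collapses the a priori independent parameters of $\Gamma$ and $\Delta$ into the one-parameter family described in the conclusion. Everything else is then forced.
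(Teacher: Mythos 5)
Your proof is correct and follows essentially the same route as the paper: match the zero eigenvalue $r+s=0$ to $\pm\sqrt{K^2-\lambda_2 V}=0$, deduce $c=m$ from the multiplicity bookkeeping, and then read off $k$, $\lambda=\mu$, and $v$ from the Hoffman relation and the standard SRG identities. The only cosmetic difference is that you derive $c=m$ from $f_1+f_2=m(n-1)$ while the paper uses $g_1+g_2=m-1=c-1$, and you make explicit the computation of $v$ via $k(k-\lambda-1)=\mu(v-k-1)$, which the paper carries out only later (in Lemma~\ref{lm:Delta}).
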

\begin{proof}
Since $r+s=0$, it follows that $\pm\sqrt{K^2-\lambda_2V}=0$ and hence $g_1+g_2=m-1$. 
It then follows from Eq. \eqref{eq:ddgmult} that $c=m$. 
From $v=mn+m=m(n+1)$ and $c=vs/(s-k)$, we obtain $k=(-s)n$. 
Using the equations $k-\mu=-rs$ and $\lambda=\mu+r+s$ (see \cite[Section~1.1.1]{BW}), 
we further find that $\lambda=\mu=(-s)(n+s)$.
\end{proof}
\smallskip

Theorem \ref{theo:main} now follows from Lemmas \ref{lm:case1}--\ref{lm:maincase}.
Next, we will prove Corollary \ref{coro}.

\begin{lemma}\label{lm:Delta}
    In the situation of Lemma \ref{lm:maincase}, the parameters of $\Delta$ satisfy 
    $m=(-s)(n-1)/(n+s)$ and: 
\begin{itemize}
    \item $V=n(-s)(n-1)/(n+s)$,
    \item $K=(-s)(n-1)$,
    \item $\lambda_1=(-s)(n+s-1)$,
    \item $\lambda_2=(-s)(n-1)(n+s)/n$.
\end{itemize}
\end{lemma}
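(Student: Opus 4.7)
The plan is to read off each parameter of $\Delta$ directly from the data already established in Lemma \ref{lm:maincase} together with the spectral identification forced by the case $r+s=0$. First I would compute the easy parameters: since $C$ has size $c=m$ and $v=m(n+1)$, the number of vertices is $V=v-c=mn$, and by Lemma \ref{lm:K} the valency is $K=k+s=(-s)n+s=(-s)(n-1)$.

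Next I would determine $m$ by comparing the two expressions for $v$. From Theorem \ref{theo:main} we have $v=(-s)(n^2-1)/(n+s)=(-s)(n-1)(n+1)/(n+s)$, while from Lemma \ref{lm:maincase} we have $v=m(n+1)$. Dividing yields
\[
m=\frac{(-s)(n-1)}{n+s},
\]
whence $V=mn=n(-s)(n-1)/(n+s)$.

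For $\lambda_1$ and $\lambda_2$ I would use the spectral matching that governs this whole section. Since $r+s=0$, the nonzero non-principal eigenvalues of $\Gamma\setminus C$ given by Proposition \ref{vd} are $r=-s$ (with multiplicity $f-c+1$) and $s$ (with multiplicity $g-c$), while $0$ appears with multiplicity $c-1$. Comparing with \eqref{eq:ddgspec}, the two eigenvalues $\pm\sqrt{K^2-\lambda_2 V}$ must both be $0$, forcing $K^2=\lambda_2 V$, and the two eigenvalues $\pm\sqrt{K-\lambda_1}$ must equal $\pm(-s)$. Hence $K-\lambda_1=s^2$, giving
\[
\lambda_1=K-s^2=(-s)(n-1)-s^2=(-s)(n+s-1),
\]
and
\[
\lambda_2=\frac{K^2}{V}=\frac{s^2(n-1)^2(n+s)}{n(-s)(n-1)}=\frac{(-s)(n-1)(n+s)}{n}.
\]

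There is no real obstacle here: the argument is purely a bookkeeping exercise once the correspondence between the four eigenvalues of $\Delta$ and those of $\Gamma\setminus C$ is pinned down. The only point requiring care is verifying that it is $\sqrt{K^2-\lambda_2 V}$ (and not $\sqrt{K-\lambda_1}$) which vanishes; this is dictated by the fact that $r=-s\ne 0$ must actually occur as an eigenvalue of $\Delta$ with positive multiplicity $f-c+1\geq 1$, while the eigenvalue $0$ has multiplicity $c-1=m-1=g_1+g_2$, matching the row-sum count in \eqref{eq:ddgmult}.
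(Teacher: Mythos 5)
Your proof is correct and follows essentially the same route as the paper: $K=k+s$ from Lemma \ref{lm:K}, $\lambda_1$ from $K-\lambda_1=s^2$, and $\lambda_2$ from $K^2-\lambda_2V=0$. The only cosmetic difference is that you obtain $m$ by equating $m(n+1)$ with the formula for $v$ already recorded in Lemma \ref{lm:maincase}, whereas the paper re-derives it directly from the identity $k(k-\lambda-1)=\mu(v-k-1)$; these amount to the same computation.
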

\begin{proof}
Using the equality $k(k-\lambda-1)=\mu(v-k-1)$ (see \cite[Section~1.1.1]{BW}), 
we obtain $m=(-s)(n-1)/(n+s)$. Then $V=mn=n(-s)(n-1)/(n+s)$ and $K=k+s=(-s)(n-1)$. 
Further, $K-\lambda_1=s^2$ yields $\lambda_1=(-s)(n+s-1)$. 
As $K^2-\lambda_2V=0$, we obtain $\lambda_2=\frac{s^2(n-1)^2}{mn}=(-s)(n-1)(n+s)/n$. 
\end{proof}

\begin{lemma}\label{lm:sprime}
    In the situation of Lemma \ref{lm:maincase}, suppose that 
    $(-s)$ is a prime power. Then $s=-q^d$ and $n=q^{d+1}$ 
    for some natural number $d$, where $q=n/(-s)$.
\end{lemma}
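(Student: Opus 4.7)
The plan is to leverage the explicit parameters of $\Delta$ recorded in Lemma~\ref{lm:Delta} together with the hypothesis that $-s$ is a prime power, and reduce the question to a single elementary divisibility in the arithmetic of $p$-powers. Write $-s=p^a$ with $p$ prime and $a\geq 1$; the whole task is to pin down the prime factorization of $n$ and then read off $d$.

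First I would extract the $p$-primary structure of $n$ using the integrality of $\lambda_2$. Since $\lambda_2=(-s)(n-1)(n+s)/n$ is a nonnegative integer (being a count of common neighbours in $\Delta$), one has $n\mid (-s)(n-1)(n+s)$. Using $\gcd(n,n-1)=1$ and $n+s\equiv s\pmod{n}$, this reduces to $n\mid s^2=p^{2a}$; hence $n=p^b$ for some integer $b$. The condition $n+s>0$, forced by $v>0$ in Lemma~\ref{lm:maincase}, then gives $b>a$.

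Next I would exploit the integrality of the number of classes $m=(-s)(n-1)/(n+s)$, which says $(n+s)\mid (-s)(n-1)$. Substituting $n=p^b$ and $-s=p^a$, cancelling a factor of $p^a$, and reducing $p^b-1$ modulo $p^{b-a}-1$, this becomes $(p^{b-a}-1)\mid (p^a-1)$. Invoking the classical criterion $p^x-1\mid p^y-1\Longleftrightarrow x\mid y$ (valid for any integer $p\geq 2$), I conclude $(b-a)\mid a$.

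With that, I would set $q=p^{b-a}$ and $d=a/(b-a)$, both positive integers, and verify directly that $-s=p^a=q^d$, $n=p^b=p^a\cdot p^{b-a}=q^{d+1}$, and consequently $q=n/(-s)$, which is the desired conclusion. There is no real obstacle here: the only nontrivial ingredient is the classical divisibility criterion for $p^x-1\mid p^y-1$; everything else is careful bookkeeping of $p$-adic valuations in the parameters from Lemma~\ref{lm:Delta}.
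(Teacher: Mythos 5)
Your proof is correct and follows essentially the same route as the paper: the integrality of $\lambda_2$ gives $n\mid s^2$ (so $n$ is a power of the same prime as $-s$), and the integrality of $m=(n-1)/(q-1)$ combined with the criterion $p^x-1\mid p^y-1\Leftrightarrow x\mid y$ pins $n$ down as a power of $q$. You merely spell out the $p$-adic bookkeeping that the paper leaves implicit.
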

\begin{proof}
    As $\lambda_2$ is integral, it follows from Lemma \ref{lm:Delta} that 
    $n$ divides $s^2$, and thus $n$ is a prime power. 
    Put $q=n/(-s)$. 
    Then $m=(-s)(n-1)/(n+s)=\frac{n-1}{\left(\frac{n}{-s}\right)-1}$ 
    implies that $n$ is an integer power of $q$, say, $n=q^{d+1}$, 
    which completes the proof of the lemma and Corollary \ref{coro}.
\end{proof}

\section{A construction of $\Gamma$ and its discussion}\label{sect:4}

\subsection{The quotient matrix of the canonical partition of $\Delta$}

Recall that the canonical partition of a divisible design graph (with $m$ classes) is equitable (see~\cite[Theorem 2.2]{CH}). This means that its adjacency matrix $A$ decomposes into blocks $A_{ij}$, $i,j\in [m]$, where $[m]:=\{1, 2,\dots ,m\}$, according to the canonical partition, and each block 
$A_{ij}$ has constant row and column sums. Let $R = (r_{ij})$ be the quotient matrix of $A$, 
where $r_{ij}$ is the row sum of $A_{ij}$. By~\cite[Proposition 3.2]{HKM}, the matrix $R$ 
is symmetric and its entries satisfy: 
\begin{itemize}
\item $\displaystyle{\sum_{i\in [m]}r_{ij} =K\quad \mathrm{for\, all}\,\, j\in [m]}$,
\item $\displaystyle{\sum_{i,j\in [m]}r_{ij}^2 =\mathrm{trace}(R^2)=mK^2 -(m-1)\lambda_2 V,}$
\item $0\leq \mathrm{trace}(R)=K+(g_1-g_2)\sqrt{K^2 -\lambda_2 V}\leq m(n-1).$
\end{itemize}

Note that the eigenvalues of $R$ are $K$, $\pm\sqrt{K^2 -\lambda_2 V}$ with multiplicities $1$, $g_1$ and $g_2$, respectively. 

\begin{lemma}\label{quotmat}(cf. \cite[Theorem 3.3]{HKM})
Let $\Delta$ be a divisible design graph with parameters as in Lemma~\ref{lm:Delta}. 
Then $R=(n+s)J$, where $J$ is the all-ones $m\times m$-matrix.
\end{lemma}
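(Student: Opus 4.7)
The plan is to show that with the parameters of Lemma \ref{lm:Delta} the matrix $R$ has rank $1$, and then to pin it down using the known row sums.

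First I would verify the key identity $K^{2}-\lambda_{2}V=0$ by a direct substitution: one gets $K^{2}=s^{2}(n-1)^{2}$ and $\lambda_{2}V=\frac{(-s)(n-1)(n+s)}{n}\cdot\frac{n(-s)(n-1)}{n+s}=s^{2}(n-1)^{2}$. By the displayed list of eigenvalues of $R$ just before the lemma, the two eigenvalues $\pm\sqrt{K^{2}-\lambda_{2}V}$ then both collapse to $0$, and $R$ has spectrum $K^{1},0^{m-1}$. In particular $R$ is a symmetric matrix of rank $1$ whose unique nonzero eigenvalue is $K$.

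Next I would use the equitable character of the canonical partition: each block row of $A$ sums to $K$ (the valency of $\Delta$), so $R\mathbf{1}=K\mathbf{1}$, i.e.\ $\mathbf{1}$ is an eigenvector of $R$ for the eigenvalue $K$. Since $R$ is symmetric, its nonzero eigenspace and its kernel are orthogonal, so the $K$-eigenspace is spanned by $\mathbf{1}$ and $R$ must be a scalar multiple of $J$. The scalar is determined by the trace: $R=(K/m)\,J$.

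Finally I would compute
\[
\frac{K}{m}=\frac{(-s)(n-1)}{(-s)(n-1)/(n+s)}=n+s,
\]
which gives $R=(n+s)J$ as claimed. There is no real obstacle here; the whole argument rests on recognising that the parameters force $K^{2}-\lambda_{2}V=0$, after which rank$(R)=1$ together with the row-sum condition leaves no freedom for $R$.
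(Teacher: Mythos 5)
Your proof is correct. It rests on the same key parameter identity as the paper's proof, namely $K^{2}-\lambda_{2}V=0$, but the deduction from it is genuinely different. The paper argues entrywise: from $\operatorname{trace}(R^{2})=mK^{2}-(m-1)\lambda_{2}V=K^{2}$ and the row sums $\sum_{i}r_{ij}=K$ it computes
\[
\sum_{i,j}\Bigl(r_{ij}-\tfrac{K}{m}\Bigr)^{2}=\sum_{i,j}r_{ij}^{2}-\tfrac{2K}{m}\sum_{i,j}r_{ij}+K^{2}=K^{2}-2K^{2}+K^{2}=0,
\]
a zero-variance argument forcing every entry to equal $K/m=n+s$. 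You instead argue spectrally: the eigenvalue list of $R$ collapses to $K^{1},0^{m-1}$, so $R$ is a symmetric rank-one matrix whose image is spanned by the Perron vector $\mathbf{1}$ (here one should note $K=(-s)(n-1)>0$, so the rank is exactly one), whence $R=(K/m)J$. Both routes are short and use only facts the paper records just before the lemma (your route uses the eigenvalue statement for $R$ with $g_{1}+g_{2}=m-1$; the paper's uses the trace identities and the symmetry of $R$). The variance argument is slightly more self-contained at the level of matrix entries; the spectral argument makes it transparent \emph{why} the conclusion holds --- the canonical partition contributes no nontrivial eigenvalues when $K^{2}=\lambda_{2}V$ --- and would generalize immediately to any equitable partition whose quotient matrix has a single nonzero eigenvalue with constant row sums.
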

\begin{proof}
Since $\sqrt{K^2 -\lambda_2 V}=0$, we have $\mathrm{trace}(R^2) = K^2 = (-s)^2(n-1)^2$ and 
the following equations:
\begin{itemize}
\item $\displaystyle{\sum_{i\in [m]}r_{ij} = (-s)(n-1) \quad \mathrm{for\, all}\,\, j\in [m]}$,
\item $\displaystyle{\sum_{i,j\in [m]}r_{ij}^2  = (-s)^2 (n-1)^2}$,
\item $\displaystyle{\sum_{i\in [m]}r_{ii}  = (-s)(n-1).}$
\end{itemize}
Then 
\begin{align*}
    \sum_{i,j}\left(r_{i,j}-\frac{(-s)(n-1)}{m}\right)^2=
    \sum_{i,j}r_{i,j}^2-2\frac{(-s)(n-1)}{m}\sum_{i,j}r_{i,j}+(-s)^2(n-1)^2=\\
    (-s)^2(n-1)^2-2\frac{(-s)(n-1)}{m}m(-s)(n-1)+(-s)^2(n-1)^2=0.
\end{align*}

Thus, $r_{ij} =(-s)(n-1)/m = (n+s)$ for all $i,j \in [m]$.
\end{proof}

\subsection{A construction of $\Gamma$}
We shall show that a strongly regular graph 
satisfying the hypothesis of Theorem \ref{theo:main}
comes from the following Construction \ref{SRG} below.
Recall that the parameters of a symmetric 2-design $(\mathcal{P},\mathcal{B})$ 
with point set $\mathcal{P}$ and block set $\mathcal{B}$ are written as the tuple
$(|\mathcal{P}|,|B|,|B\cap B'|)$, which does not depend on the particular choice of 
blocks $B,B'\in \mathcal{B}$.

\begin{construction}\label{SRG}
Let $n$ be a natural number and $s$ be a negative integer such that there exists a divisible design graph $\Delta$ with $m=(-s)(n-1)/(n+s)$ canonical classes, say $P_1,\dots, P_m$, each of size $n$, 
and with parameters $V, K, \lambda_1, \lambda_2$ as in Lemma \ref{lm:Delta}.

Let $\mathcal{D}=(\mathcal{P},\mathcal{B})$ be a symmetric $2$-design with parameters $(m, (-s), (-s)(n+s)/n)$ and $\phi$ be an arbitrary bijection from the set of all canonical classes 
of $\Delta$ to the set $\mathcal{B}$ of blocks.

Let $\Gamma$ be a graph defined as follows:
\begin{itemize}
   \item The vertex set of $\Gamma$ is the union of the vertex set of $\Delta$ and $\mathcal{P}$. 
   \item Two different vertices from $\Delta$ are adjacent in $\Gamma$ if and only if 
   they are adjacent in $\Delta$. The subgraph of $\Gamma$ induced by $\mathcal{P}$ is an $m$-coclique. 
   A vertex from a canonical class $P_i$ of $\Delta$ is adjacent to a vertex $y\in \mathcal{P}$ 
   if and only if $y$ belongs to the block $\phi(P_i)$ in $\mathcal{D}$. 
   \end{itemize}
\end{construction}

Recall that $\Gamma(x)$ stands for the set of neighbours of a vertex $x$ of a graph~$\Gamma$.

\begin{theorem}\label{Th2}
A graph $\Gamma$ from Construction~\ref{SRG} is a strongly regular graph with parameters 
$v=m(n+1)$, $k=(-s)n$, $\lambda=\mu=(-s)(n+s)$. 
\end{theorem}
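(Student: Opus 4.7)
The plan is to verify directly that $\Gamma$ is regular of degree $(-s)n$ and that any two distinct vertices share exactly $(-s)(n+s)$ common neighbours, which yields $\lambda=\mu=(-s)(n+s)$ in one stroke. The vertex count $v=m(n+1)$ is immediate from $|\Delta|=mn$ and $|\mathcal{P}|=m$.

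For regularity, a vertex $x\in\Delta$ has $K=(-s)(n-1)$ neighbours inside $\Delta$ by Lemma~\ref{lm:Delta}, plus $|\phi(P_i)|=-s$ neighbours in $\mathcal{P}$, where $P_i$ is its canonical class; this sums to $(-s)n$. A vertex $y\in\mathcal{P}$ has no neighbours in the coclique $\mathcal{P}$, and in a symmetric $2$-design the replication number equals the block size, so $y$ lies in exactly $-s$ blocks; each corresponding canonical class contributes its $n$ vertices as neighbours, giving degree $(-s)n$ again.

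I would then verify the common-neighbour count by case analysis on the location of the two vertices. For $x_1,x_2\in\Delta$ lying in the same class $P_i$, they share $\lambda_1=(-s)(n+s-1)$ common neighbours inside $\Delta$ plus the $-s$ points of $\phi(P_i)$ in $\mathcal{P}$, totalling $(-s)(n+s)$. For $x_1\in P_i$ and $x_2\in P_j$ with $i\neq j$, they share $\lambda_2=(-s)(n-1)(n+s)/n$ inside $\Delta$ and $|\phi(P_i)\cap\phi(P_j)|=(-s)(n+s)/n$ in $\mathcal{P}$ (the intersection number of two distinct blocks in a symmetric design), summing once more to $(-s)(n+s)$ after elementary simplification. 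For two distinct $y_1,y_2\in\mathcal{P}$, all common neighbours lie in $\Delta$: the pair $\{y_1,y_2\}$ is contained in $(-s)(n+s)/n$ blocks of $\mathcal{D}$, and each of the corresponding canonical classes contributes all $n$ of its vertices, again giving $(-s)(n+s)$.

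The delicate case, and where I expect the conceptual content to lie, is the mixed pair $x\in P_i\subseteq\Delta$, $y\in\mathcal{P}$. Since $y$ has no neighbours in $\mathcal{P}$, we must count those neighbours of $x$ in $\Delta$ that lie in classes $P_j$ with $y\in\phi(P_j)$. The key input here is Lemma~\ref{quotmat}: because the quotient matrix of the canonical partition of $\Delta$ equals $(n+s)J$, the vertex $x$ has exactly $n+s$ neighbours in every canonical class, including its own. Since $y$ belongs to exactly $-s$ blocks, the resulting count is $(-s)(n+s)$ regardless of whether $y\in\phi(P_i)$, i.e.\ regardless of whether $x\sim y$. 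This uniform count across all cases simultaneously establishes both $\lambda=(-s)(n+s)$ and $\mu=(-s)(n+s)$; without the constancy guaranteed by Lemma~\ref{quotmat}, the mixed case would split into two different values depending on adjacency and the construction would fail to produce a strongly regular graph.
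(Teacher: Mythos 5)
Your proposal is correct and follows essentially the same route as the paper's own proof: the same four-case analysis of common neighbours, the same use of the symmetric design's replication and intersection numbers, and the same appeal to Lemma~\ref{quotmat} (quotient matrix $(n+s)J$) to settle the mixed case uniformly. Your explicit remark that the constancy of the quotient matrix is what makes $\lambda=\mu$ come out the same in the mixed case is a nice articulation of the point the paper leaves implicit, but it is not a different argument.
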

\begin{proof}
The number of vertices of $\Gamma$ is $mn+m$.
If $x\in P_i$, then there are $K + |\phi(P_i)| = (-s)n$ vertices in $\Gamma(x)$. 
If $y\in \mathcal{P}$, then by the construction 
$\Gamma(y)$ consists of the vertices of $(-s)$ canonical classes of $\Delta$, of size $n$ each. 
Hence, $\Gamma$ is a regular graph of degree~$(-s)n$.

Let $x$ and $y$ be two different vertices from $\Gamma$. Then:
\begin{itemize}
    \item[(1)] if $x,y$ belong to the same canonical class of $\Delta$,
    then they have $\lambda_1 = (-s)(n+s-1)$ common neighbours in $\Delta$ and $(-s)$ 
    common neighbours in~$\mathcal{P}$. Thus, $|\Gamma(x)\cap\Gamma(y)|=(-s)(n+s)$.
    \item[(2)] if $x,y$ belong to different canonical classes of $\Delta$, then they 
    have $\lambda_2=(-s)(n-1)(n+s)/n$ common neighbours in $\Delta$ and $(-s)(n+s)/n$ 
    common neighbours in~$\mathcal{P}$. Thus, $|\Gamma(x)\cap\Gamma(y)|=(-s)(n+s)$.
    \item[(3)] if $x,y$ are in $\mathcal{P}$, then their neighborhoods share $(-s)(n+s)/n$
    canonical classes of $\Delta$, with $n$ vertices in each of them. 
    Thus, $|\Gamma(x)\cap\Gamma(y)|=(-s)(n+s)$.
    \item[(4)] finally, suppose that $x\in \Delta$ and $y\in \mathcal{P}$. 
    By Lemma~\ref{quotmat}, the quotient matrix of the canonical partition of $\Delta$ equals $(n+s)J$.
    Since $\Gamma(y)$ consists of $-s$ canonical classes of $\Delta$, 
    it follows that $x$ and $y$ have $(n+s)$ times $(-s)$ common neighbours in $\Delta$ 
    (and no common neighbours in~$\mathcal{P})$. 
\end{itemize}

Therefore, in all cases the number of common neighbours of two different vertices 
in $\Gamma$ equals $(-s)(n+s)$. This completes the proof of Theorem~\ref{Th2}.
\end{proof} 

\begin{lemma}\label{cn} 
Let $\Gamma$ be a graph satisfying the hypothesis of Theorem \ref{theo:main} and 
$\Delta$ a divisible design graph induced on $\Gamma\setminus C$. 
\begin{itemize}
\item[$(1)$] If $x$ and $y$ are vertices from the same canonical class of $\Delta$, 
then $\Gamma(x)\cap C=\Gamma(y)\cap C$. 
\item[$(2)$] If $z$ is a vertex in $C$, then every canonical class of $\Delta$ 
is either contained in or disjoint with $\Gamma(z)$.
\end{itemize}
\end{lemma}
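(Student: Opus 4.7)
The plan is to prove part $(1)$ by a double-counting of common neighbours of $x,y$, comparing what we know inside $\Delta$ with what the strongly regular structure of $\Gamma$ forces; part $(2)$ will then be an immediate reformulation of $(1)$.

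For part $(1)$: fix $x,y$ in the same canonical class $P_i$ of $\Delta$. Since $\Delta$ is a divisible design graph with parameters as in Lemma \ref{lm:Delta}, the pair $x,y$ has exactly $\lambda_1 = (-s)(n+s-1)$ common neighbours inside $\Delta$. Since $\Gamma$ is strongly regular with $\lambda = \mu = (-s)(n+s)$ by Lemma \ref{lm:maincase}, the pair $x,y$ has exactly $(-s)(n+s)$ common neighbours in the whole of $\Gamma$ (whether or not they are adjacent, since $\lambda = \mu$). Subtracting, the number of common neighbours of $x$ and $y$ that lie in $C$ equals $\lambda - \lambda_1 = -s$. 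On the other hand, since $C$ is a Hoffman coclique, every vertex of $\Gamma \setminus C$ has exactly $-s$ neighbours in $C$ (by the standard fact cited in Lemma \ref{lm:K}). Thus $|\Gamma(x)\cap C| = |\Gamma(y)\cap C| = -s$, and their intersection has cardinality $-s$ as well, forcing $\Gamma(x)\cap C = \Gamma(y)\cap C$.

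For part $(2)$: by $(1)$, for each canonical class $P_i$ of $\Delta$ there is a well-defined subset $N_i \subseteq C$ of size $-s$ such that $\Gamma(x)\cap C = N_i$ for every $x \in P_i$. Given any $z \in C$, the condition $z \in \Gamma(x)$ is equivalent to $z \in N_i$, which does not depend on the choice of $x \in P_i$. Hence either $P_i \subseteq \Gamma(z)$ (when $z \in N_i$) or $P_i \cap \Gamma(z) = \emptyset$ (when $z \notin N_i$), which is the desired dichotomy.

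The argument is essentially a one-line counting once the key numerical identity $\lambda - \lambda_1 = -s$ is recognised, so I do not anticipate any genuine obstacle; the only thing to watch is that the hypothesis $\lambda = \mu$ (guaranteed by Lemma \ref{lm:maincase}) is what makes the counting work uniformly for adjacent and non-adjacent pairs $x,y$ in $P_i$.
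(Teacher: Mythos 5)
Your proof is correct and follows essentially the same route as the paper: both compute that a pair of vertices in the same canonical class has $\mu-\lambda_1=-s$ common neighbours in $C$, note that each vertex has exactly $-s$ neighbours in $C$ by the Hoffman coclique property, and conclude that the two neighbourhoods in $C$ coincide, with $(2)$ an immediate reformulation. Your explicit remark that $\lambda=\mu$ is what makes the count uniform over adjacent and non-adjacent pairs is a worthwhile clarification the paper leaves implicit.
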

\begin{proof} 
Let $x$ and $y$ be any two vertices from the same canonical class of $\Delta$.
By Lemmas~\ref{lm:maincase} and~\ref{lm:Delta}, $\mu - \lambda_1 = (-s)$. Thus, $x$ and $y$ 
have exactly $(-s)$ common neighbours in $C$, which are all their neighbours in $C$ as 
$k-K=(-s)$ by Lemma \ref{lm:K}. This shows $(1)$, and $(2)$ follows immediately from $(1)$.
\end{proof}

\begin{theorem}\label{Th3}
Let $\Gamma$ be a graph satisfying the hypothesis of Theorem \ref{theo:main}. 
Then $\Gamma$ can be obtained from Construction \ref{SRG}.
\end{theorem}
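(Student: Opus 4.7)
The plan is to recover directly from $\Gamma$ the three ingredients of Construction \ref{SRG}: the divisible design graph $\Delta$ is already given as the subgraph induced on $\Gamma\setminus C$, and it remains to extract a symmetric $2$-design $\mathcal{D}$ on $C$ together with a bijection $\phi$ from the canonical classes of $\Delta$ to its block set. By Lemma \ref{cn}$(1)$, for each canonical class $P_i$ ($i\in[m]$) every vertex of $P_i$ has the same set of $C$-neighbors, so I would set $B_i:=\Gamma(x)\cap C$ for any $x\in P_i$; since $C$ is a Hoffman coclique, $|B_i|=-s$. I then take $\phi(P_i):=B_i$ and $\mathcal{B}:=\{B_1,\dots,B_m\}$.

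The crux is to show that $(C,\mathcal{B})$ is a symmetric $2$-design with parameters $(m,-s,(-s)(n+s)/n)$ as required by Construction \ref{SRG}. For the pair count, consider two distinct $y_1,y_2\in C$: by Lemma \ref{lm:maincase}, $|\Gamma(y_1)\cap\Gamma(y_2)|=\mu=(-s)(n+s)$; since $C$ is a coclique, this intersection lies in $\Gamma\setminus C$, and by Lemma \ref{cn}$(2)$ each $\Gamma(y_i)$ is a union of canonical classes, hence so is $\Gamma(y_1)\cap\Gamma(y_2)$. As every class has size $n$, the intersection comprises exactly $\mu/n=(-s)(n+s)/n$ canonical classes, which is precisely the number of blocks $B_i$ containing both $y_1$ and $y_2$.

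To see that $\mathcal{B}$ consists of $m$ distinct blocks (so that $\phi$ is a bijection), I would run a dual intersection count: for $i\ne j$, pick $x_i\in P_i$ and $x_j\in P_j$ and compute $|B_i\cap B_j|=|\Gamma(x_i)\cap\Gamma(x_j)\cap C|$. Using $\lambda=\mu=(-s)(n+s)$ in $\Gamma$ together with the divisible-design parameter $\lambda_2=(-s)(n-1)(n+s)/n$ from Lemma \ref{lm:Delta}, a direct subtraction gives $|B_i\cap B_j|=(-s)(n+s)/n$. The equality $B_i=B_j$ would then force $-s=(-s)(n+s)/n$, i.e.\ $s=0$, contradicting $s<0$. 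Finally, for $x\in P_i$ and $y\in C$, adjacency in $\Gamma$ is by construction equivalent to $y\in B_i=\phi(P_i)$, which is exactly the adjacency rule of Construction \ref{SRG}; therefore $\Gamma$ is realized by that construction from $\Delta$, $\mathcal{D}$, and $\phi$. I expect the only non-cosmetic work to be the $2$-design pair count and the distinctness of the $B_i$; both reduce to comparing common-neighbor counts in $\Gamma$ and in $\Delta$ using Lemmas \ref{lm:maincase}, \ref{lm:Delta}, and \ref{cn}.
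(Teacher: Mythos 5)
Your proposal is correct and follows essentially the same route as the paper's (much terser) proof: define the blocks as the $C$-neighbourhoods of the canonical classes via Lemma \ref{cn}, and count that two points of $C$ lie in exactly $\mu/n=(-s)(n+s)/n$ common blocks. Your additional verification that the $B_i$ are pairwise distinct (via $|B_i\cap B_j|=(-s)(n+s)/n<-s$) is a detail the paper leaves implicit, and it is carried out correctly.
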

\begin{proof}
    It suffices to verify that there exist a design $\mathcal{D}$ and a bijection $\phi$: 
    their existence follows from Lemma \ref{cn} (observe that two different vertices 
    from $C$ share $\mu/n=(-s)(n+s)/n$ canonical classes of $\Delta$).
\end{proof}

\subsection{Concluding remarks}

Divisible design graphs with the parameters required in Construction~\ref{SRG} were first 
found in~\cite{VK} 
for $n=q^d$, $s=-q^{d-1}$, where $q$ is any prime power and $d$ is any positive integer $>1$, 
using affine designs and some ideas of Wallis~\cite{WW}, Fon-Der-Flaass~\cite{FF}, 
and Muzychuk~\cite{MM} constructions of strongly regular graphs. 
Construction~\ref{SRG} generalizes the construction of strongly regular graphs 
which was given in~\cite{VKa}. 
Note that there are exactly 28 strongly regular graphs with parameters $(40,27,18,18)$ by \cite{ES}: 
27 of them satisfy the hypothesis of Theorem \ref{theo:main} with a divisible design graph 
having parameters $(36,24,15,16;4,9)$. Removing different Hoffman cocliques from the same strongly 
regular graph may give non-isomorphic divisible design graphs; in particular, in this way 
one obtains 87 divisible design graphs with parameters $(36,24,15,16;4,9)$, only four of which 
come from the construction in \cite{VK} using affine planes. 

All known examples of strongly regular graphs with the parameters as in Theorem~\ref{Th2} 
and divisible design graphs with the parameters as in Construction~\ref{SRG} have 
the least eigenvalue $s$, which is a prime power.

Let us discuss the first case when $s$ is not a prime power, i.e., $s=-6$.
Then $\frac{6(n-1)}{n-6}$ and $\frac{6(n-6)}{n}$ must be integers 
(see the parameters $m$ and $\lambda_2$ in Lemma \ref{lm:Delta}). Thus, $n\in \{9,12,36\}$.
If $n=9$, then $n+s=3$; hence, by Lemma \ref{quotmat}, a subgraph of $\Delta$ induced by 
a canonical class would have been a cubic graph on 9 vertices, which is impossible.

If $n=12$, then the parameters of $\Delta$ and $\Gamma$ would be $(132, 66, 30, 33; 11, 12)$ and 
$(143, 72, 36, 36)$, respectively. Strongly regular graphs with such parameters do exist \cite{BW,VeK}. 
However, as far as we could check, the examples known to us do not contain Hoffman cocliques; 
thus, they do not come from Construction \ref{SRG}.

Finally, if $n=36$, then $m=7$ and the parameters of $\Delta$ and $\Gamma$ would be 
$(252, 210, 174, 175; 7, 36)$ and $(259, 216, 180, 180)$, respectively.
The existence of both graphs is unknown to us 
(and they cannot be constructed by using affine designs, 
as there is no affine plane of order $6$ \cite{T1,T2}).
Construction \ref{SRG} shows that the existence of the former 
one implies the existence of the latter one.

\bibliographystyle{amsplain}
\bibliography{refer}

\end{document}